\documentclass[english]{article}
\usepackage[T1]{fontenc}
\usepackage{float}
\usepackage{booktabs}
\usepackage{units}
\usepackage{amsmath}
\usepackage{amsthm}
\usepackage{amssymb}
\usepackage{graphicx}
\usepackage{babel}
\providecommand{\keywords}[1]{\textbf{\textit{Keywords : }} #1}
\providecommand{\smcn}[1]{\textbf{\textit{Mathetical classification numbers : }} #1}

\makeatletter

\providecommand{\tabularnewline}{\\}

\numberwithin{equation}{section}
\newcommand{\lyxaddress}[1]{
	\par {\raggedright #1
	\vspace{1.4em}
	\noindent\par}
}
\theoremstyle{remark}
\newtheorem*{notation*}{\protect\notationname}
\theoremstyle{plain}
\newtheorem{thm}{\protect\theoremname}[section]
\theoremstyle{definition}
\newtheorem{defn}[thm]{\protect\definitionname}
\theoremstyle{remark}
\newtheorem{rem}[thm]{\protect\remarkname}
\theoremstyle{plain}
\newtheorem{cor}[thm]{\protect\corollaryname}
\theoremstyle{definition}
\newtheorem{example}[thm]{\protect\examplename}
\theoremstyle{plain}
\newtheorem{lem}[thm]{\protect\lemmaname}
\theoremstyle{plain}
\newtheorem{question}[thm]{\protect\questionname}

\makeatother

\usepackage{babel}
\providecommand{\corollaryname}{Corollary}
\providecommand{\definitionname}{Definition}
\providecommand{\examplename}{Example}
\providecommand{\lemmaname}{Lemma}
\providecommand{\notationname}{Notation}
\providecommand{\questionname}{Question}
\providecommand{\remarkname}{Remark}
\providecommand{\theoremname}{Theorem}

\begin{document}
\title{The graphs of pyramids are determined by their spectrum}
\author{Noam Krupnik \thanks{Noam.Krupnik@campus.technion.ac.il} \and Abraham
Berman \thanks{berman@technion.ac.il}}
\maketitle

\lyxaddress{Department of Mathematics, Technion -- Israel Institute of Technology,
Haifa 3200003, Israel}
\begin{abstract}
For natural numbers $k<n$ we study the graphs $T_{n,k}:=K_{k}\lor\overline{K_{n-k}}$.
For $k=1$, $T_{n,1}$ is the star $S_{n-1}$. For $k>1$ we refer
to $T_{n,k}$ as a \emph{graph of pyramids}. We prove that the graphs
of pyramids are determined by their spectrum, and that a star $S_{n}$
is determined by its spectrum iff $n$ is prime. We also show that
the graphs $T_{n,k}$ are completely positive iff $k\le2$. 
\end{abstract}

\keywords { 
cospectral graphs, graphs that are determined by their spectrum,
graphs of pyramids , star graphs, completely positive graphs, Schur
complement. 
} \\

\smcn{05C50, 15B48}

\section{Introduction }

The first author participated in a course in matrix theory given by
the second author. This paper is one of the outcomes of this course.
Two of the topics studied in the course were completely positive (CP)
matrices and graphs and cospectrality of graphs. In one of the problems
in the course we used the Schur complement to show that the graph
$T_{n}$, the graph of $n-2$ triangles with a common base, is CP.
In this paper, we generalize $T_{n}$ to $T_{n,k}$ - graphs that
consist of $n-k$ pyramids $K_{k+1}$ with $K_{k}$ as a common base
and use the Schur complement to compute their spectrum. For $k>1$
we refer to $T_{n,k}$ as graphs of pyramids and show that they are
determined by their spectrum (DS). For $k=1$, $T_{n,1}$ is a star
$S_{n-1}$, and it is DS if and only if $n-1$ is prime. Since the
motivation for this article was the proof that $T_{n}$ is completely
positive, we start the paper with a short survey of complete positivity.
We summarize the paper with a table that classifies the graphs $T_{n,k}$
according to their being DS/CP. 

\begin{figure}[H]
\begin{centering}
\includegraphics[scale=0.3]{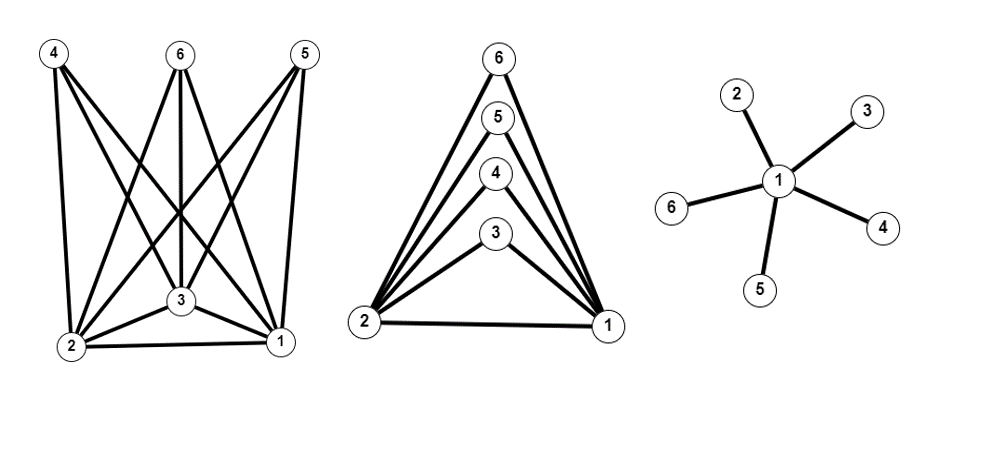}
\par\end{centering}
\centering{}\caption{the graphs $T_{6,3}$ , $T_{6\text{,}2}$ and $T_{6,1}$}
\end{figure}

\section{Background }

\subsection{Matrix theory preliminaries}

We denote by $\mathbb{R}^{m\times n}$ the vector space of $m\times n$
real matrices, and by $S^{n}$ the subspace of symmetric matrices
in $\mathbb{R}^{n\times n}$. Let $\lambda_{k}\le...\le\lambda_{2}\le\lambda_{1}$
be the distinct eigenvalues of $A\in S^{n}$ and let $\alpha_{i}$
be the multiplicity of $\lambda_{i}\ ;\ i=1,2,...,k$ , $\sum_{i=1}^{k}\alpha_{i}=n$.
The \emph{spectrum} of $A$ , $\sigma\left(A\right)$, is the multiset
of its eigenvalues, and is denoted by $\sigma\left(A\right)=\left\{ \left(\lambda_{k}\right)^{\alpha_{k}},\ldots,\left(\lambda_{2}\right)^{\alpha_{2}},\left(\lambda_{1}\right)^{\alpha_{1}}\right\} $. 

For background on matrix theory we refer the readers to \cite{horn2012matrix}.
Two theorems that are frequently used in the paper are Cauchy's interlace
theorem (Theorem \ref{thm:Cauchy's-Eigenvalue-interlacing}) and Schur's
theorem (Theorem \ref{thm:Schur complement}). 
\begin{notation*}
$\left\langle n\right\rangle =\left\{ 1,2,...,n\right\} $
\end{notation*}
\begin{thm}
\label{thm:Cauchy's-Eigenvalue-interlacing} (Cauchy's Interlace Theorem
\cite{parlett1998symmetric}). For natural numbers $n>m$, let $B\in S^{m}$
be a principal submatrix of $A\in S^{n}$. Denote the eigenvalues
of $A$ by $\lambda_{n}\le...\le\lambda_{2}\le\lambda_{1}$ and the
eigenvalues of $B$ by $\mu_{m}\le...\le\mu_{2}\le\mu_{1}$. Then
\[
\forall k\in<m>\ ,\ \lambda_{n+k-m}\le\mu_{k}\le\lambda_{k}.
\]
\end{thm}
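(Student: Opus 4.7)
The plan is to prove Cauchy's Interlace Theorem via the Courant--Fischer min--max characterization of eigenvalues. For $A \in S^n$ with eigenvalues $\lambda_n \le \ldots \le \lambda_1$, recall that
\[
\lambda_k \;=\; \max_{\substack{V \subset \mathbb{R}^n \\ \dim V = k}} \min_{0 \ne x \in V} \frac{x^T A x}{x^T x} \;=\; \min_{\substack{V \subset \mathbb{R}^n \\ \dim V = n-k+1}} \max_{0 \ne x \in V} \frac{x^T A x}{x^T x}.
\]
Since $B \in S^m$ is a principal submatrix of $A$, there is a subset $S \subseteq \langle n \rangle$ with $|S|=m$ such that $B$ is obtained from $A$ by keeping the rows and columns indexed by $S$. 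Let $W \subset \mathbb{R}^n$ be the $m$-dimensional coordinate subspace of vectors supported on $S$. The obvious map $\iota : \mathbb{R}^m \to W$ that places the entries of $y$ into the $S$-coordinates is a linear isometry satisfying $\iota(y)^T A \iota(y) = y^T B y$, so it gives a dimension-preserving bijection between subspaces of $\mathbb{R}^m$ and subspaces of $W$ that preserves the Rayleigh quotient.

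For the upper bound $\mu_k \le \lambda_k$, I would take a $k$-dimensional subspace $V^* \subset \mathbb{R}^m$ that attains the max--min for $\mu_k$, lift it to the $k$-dimensional subspace $\iota(V^*) \subset W \subset \mathbb{R}^n$, and use $\iota(V^*)$ as a candidate subspace in the max--min characterization of $\lambda_k$. Since the isometry preserves Rayleigh quotients, the inner min equals $\mu_k$, and since $\lambda_k$ maximizes over a strictly larger collection of subspaces than those lying inside $W$, we get $\lambda_k \ge \mu_k$. For the lower bound $\lambda_{n+k-m} \le \mu_k$, I would use the dual (min--max) formula: $\mu_k$ equals the minimum over $(m-k+1)$-dimensional subspaces of $\mathbb{R}^m$ of the maximum of the Rayleigh quotient of $B$, while $\lambda_{n+k-m}$ equals the minimum over subspaces of $\mathbb{R}^n$ of dimension $n - (n+k-m) + 1 = m-k+1$. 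Lifting an optimizer for $\mu_k$ via $\iota$ yields a subspace of the correct dimension in $\mathbb{R}^n$ on which the maximum of the Rayleigh quotient of $A$ equals $\mu_k$, and the outer minimum of $\lambda_{n+k-m}$ over a larger family gives $\lambda_{n+k-m} \le \mu_k$.

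The main obstacle is bookkeeping: matching the indices $n+k-m$ and $m-k+1$ correctly on both sides of the dual formula, and confirming that the embedding $\iota$ genuinely preserves Rayleigh quotients (rather than only the numerator). Both are routine once set up. An alternative route I would keep in mind as a sanity check is induction on $n - m$: prove the sharper interlacing $\lambda_{k+1} \le \mu_k \le \lambda_k$ in the base case where $B$ is obtained by deleting a single row and column (using the same min--max argument, which is cleaner there), and then iterate $n-m$ times to obtain the stated bounds.
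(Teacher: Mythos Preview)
Your argument via the Courant--Fischer min--max principle is correct and is the standard proof of Cauchy's interlacing theorem; the index bookkeeping you flag as the main obstacle is handled correctly (in particular, $n-(n+k-m)+1 = m-k+1$ matches the dimension needed in the dual formula for $\mu_k$), and the isometry $\iota$ indeed preserves both numerator and denominator of the Rayleigh quotient since it is a coordinate inclusion.

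That said, there is nothing to compare against: the paper does not supply its own proof of this theorem. It is quoted as a classical result with a citation to Parlett \cite{parlett1998symmetric} and then used as a black box (chiefly through its corollary, Theorem~\ref{thm:induced-subgraph-interlacing}, on induced subgraphs). Your proposal is thus a complete and correct proof of a statement the paper merely imports.
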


\begin{defn}
\label{def:Schur-Complement--}Let $M=\begin{pmatrix}A & B\\
C & D
\end{pmatrix}$ be a square block matrix, where $D$ is nonsingular. The \emph{Schur Complement},
$\nicefrac{M}{D}$, of $D$ \emph{in} $M$ is 
\[
\nicefrac{M}{D}=A-BD^{-1}C.
\]
\end{defn}

\begin{thm}
\label{thm:Schur complement} \cite{schur1917potenzreihen} In the
notation of Definition \ref{def:Schur-Complement--} 
\begin{enumerate}
\item $det\left(M\right)=det\left(D\right)det\left(\nicefrac{M}{D}\right)$.
\item $rank\left(M\right)=rank\left(D\right)+rank\left(\nicefrac{M}{D}\right)$.
\end{enumerate}
\end{thm}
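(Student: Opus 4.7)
The plan is to prove both assertions simultaneously by producing an explicit block factorization of $M$ that exhibits $D$ and $\nicefrac{M}{D}$ as the diagonal pieces of a block-diagonal factor. Specifically, since $D$ is assumed nonsingular, I would introduce
\[
L = \begin{pmatrix} I & BD^{-1}\\ 0 & I \end{pmatrix}, \qquad R = \begin{pmatrix} I & 0\\ D^{-1}C & I \end{pmatrix},
\]
and verify by a direct block-matrix multiplication the identity
\[
M = L \begin{pmatrix} \nicefrac{M}{D} & 0\\ 0 & D \end{pmatrix} R.
\]
This is the classical block $LDU$ decomposition of $M$ with pivot $D$: the right factor clears out the $(2,1)$ block against the pivot, the middle factor absorbs the resulting diagonal blocks, and the left factor reinstates the $(1,2)$ block $B$.

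Once this factorization is in hand, both conclusions of the theorem follow with essentially no additional work. The factors $L$ and $R$ are block unit triangular, so each has determinant $1$ and is in particular invertible. Taking determinants on both sides of the factorization and using the multiplicativity of $\det$ together with the fact that the determinant of a block-diagonal matrix is the product of the determinants of its diagonal blocks gives $\det(M) = \det(\nicefrac{M}{D})\det(D)$, which is part (1). For part (2), invertibility of $L$ and $R$ means that multiplication by them preserves rank, so $\mathrm{rank}(M)$ equals the rank of the middle block-diagonal matrix; and the rank of a block-diagonal matrix is the sum of the ranks of its diagonal blocks, giving $\mathrm{rank}(M) = \mathrm{rank}(\nicefrac{M}{D}) + \mathrm{rank}(D)$.

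The only step that requires any genuine computation is the verification of the factorization itself, and that is a brief direct block multiplication. In other words, there is no serious obstacle here: the Schur complement identity packages both claims into a single block-diagonalization statement, and the theorem is essentially a corollary of that single algebraic identity combined with standard properties of $\det$ and rank.
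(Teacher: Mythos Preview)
Your proof is correct and is the standard argument via the block $LDU$ factorization. Note, however, that the paper does not give its own proof of this statement: Theorem~\ref{thm:Schur complement} is simply quoted from the literature with a citation to Schur~\cite{schur1917potenzreihen}, so there is no proof in the paper to compare against.
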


\subsection{Graph theory preliminaries}

A \emph{graph} $G$ is a pair $G=\left(V,E\right)$ where $V$ is
the set of \emph{vertices} and $E\subset V\times V$ is the set of
\emph{edges}. The \emph{order} of a graph $G=\left(V,E\right)$
is $\left|V\right|$. A graph $G$ is \emph{simple} if it has no
loops, i.e. for all $v\in V$ , $\left(v,v\right)\notin E$ . $G$
is an \emph{undirected} graph if for every $v,u\in V$, $\left(v,u\right)\in E\ \Leftrightarrow\left(u,v\right)\in E$.
In this paper all of the graphs are simple and undirected. A graph
is \emph{connected} if there is a path between any two of its vertices.
A \emph{tree} is a connected graph without cycles. 
\begin{thm}
\label{thm:Tree-properties}The following properties of a graph $G=\left(V,E\right)$
of order $n$ are equivalent
\begin{enumerate}
\item $G$ is a tree.
\item $G$ is connected and $\left|E\right|=n-1$.
\item $G$ has no cycles and $\left|E\right|=n-1$.
\end{enumerate}
\end{thm}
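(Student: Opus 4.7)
The plan is to establish the equivalence by proving the cycle of implications $(1)\Rightarrow(2)\Rightarrow(3)\Rightarrow(1)$, using induction on the order $n$ in the first step and a component-counting argument in the last. Throughout I will rely on the elementary fact that in any tree with at least two vertices there is a \emph{leaf} (a vertex of degree one); this follows by considering the endpoints of a longest path, which cannot have any additional neighbors outside the path (connectedness would extend it) or inside the path (this would create a cycle).

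For $(1)\Rightarrow(2)$ the only content is to show $|E|=n-1$, since connectedness is part of the definition of a tree. I would proceed by induction on $n$. The base case $n=1$ is trivial. For the inductive step, pick a leaf $v$ in the tree $G$ and consider $G-v$. Removing a leaf preserves both connectedness and the absence of cycles, so $G-v$ is a tree on $n-1$ vertices; by the induction hypothesis it has $n-2$ edges, and adding back the single edge incident to $v$ yields $|E(G)|=n-1$.

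For $(2)\Rightarrow(3)$ I would argue by contradiction. If $G$ is connected with $|E|=n-1$ but contains a cycle, then deleting any edge $e$ lying on a cycle preserves connectedness (any path through $e$ can be rerouted around the cycle). Repeating this operation until no cycles remain produces a connected acyclic subgraph $G'$ on the same vertex set, i.e.\ a spanning tree. By $(1)\Rightarrow(2)$ applied to $G'$ we would have $|E(G')|=n-1$, but by construction $|E(G')|<|E(G)|=n-1$, a contradiction.

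For $(3)\Rightarrow(1)$, assume $G$ has no cycles and $|E|=n-1$, and let $C_1,\dots,C_c$ be its connected components with $|V(C_i)|=n_i$. Each $C_i$ is connected and acyclic, hence a tree, so by $(1)\Rightarrow(2)$ it has $n_i-1$ edges. Summing gives $|E|=\sum_{i=1}^c(n_i-1)=n-c$, and comparing with $|E|=n-1$ forces $c=1$; thus $G$ is connected and therefore a tree. The main technical point — and the only place where genuine graph-theoretic reasoning is needed — is the leaf-existence lemma used in the induction step of $(1)\Rightarrow(2)$; once that is in hand the remaining implications are essentially bookkeeping.
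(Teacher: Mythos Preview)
Your argument is correct: the leaf-existence lemma is established cleanly, the induction for $(1)\Rightarrow(2)$ is sound, and the remaining implications follow as you describe. Note, however, that the paper does not supply its own proof of this theorem at all --- it is stated without proof in the graph-theory preliminaries as a standard background fact --- so there is no approach in the paper to compare yours against.
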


\begin{defn}
The \emph{complement}, $\overline{G}$, of a graph $G=\left(V,E\right)$,
is: 
\[
\overline{G}=\left(V,\left\{ \left(v,u\right)\ \mid\ v\ne u,\ \left(v,u\right)\notin E\right\} \right).
\]
\end{defn}

\begin{defn}
Let $G_{1}=\left(V_{1},E_{1}\right),G_{2}=\left(V_{2},E_{2}\right)$
be two graphs. 
\begin{enumerate}
\item $G_{1}$ and $G_{2}$ are \emph{disjoint} if $V_{1}\cap V_{2}=\emptyset$. 
\item If $G_{1},G_{2}$ are disjoint, their \emph{disjoint union} is 
\[
G_{1}\uplus G_{2}=\left(V_{1}\uplus V_{2},E_{1}\uplus E_{2}\right).
\]
\end{enumerate}
\end{defn}

\begin{defn}
The \emph{join} , $G_{1}\lor G_{2}$, of two graphs $G_{1}=\left(V_{1},E_{1}\right),G_{2}=\left(V_{2},E_{2}\right)$
is obtained from their disjoint union by connecting all the vertices
in $G_{1}$ to all the vertices in $G_{2}$. 
\end{defn}

In the paper we consider the following graphs of order $n$ : 
\begin{itemize}
\item $K_{n}$ - The complete graph. 
\item $\overline{K_{n}}$ - The empty graph. 
\item $S_{n-1}$ - The star graph. 
\item $C_{n}$ - The cycle. 
\item $P_{n}$ - The path. 
\end{itemize}
We also consider the complete bipartite graph $K_{n,m}=\overline{K_{n}}\lor\overline{K_{m}}$.
Observe that $S_{n}=K_{1,n}$. 

\begin{defn}
Let $G=\left(V,E\right)$. The \emph{line graph} of $G$ is $L\left(G\right)=\left(V',E'\right)$
where 
\[
V'=E\ ,\ E'=\left\{ \left(\left(v_{1},v_{2}\right),\left(v_{2},v_{3}\right)\right)\ \mid\ v_{1},v_{2},v_{3}\in V\right\} .
\]
\end{defn}

\begin{defn}
Let $G=\left(V\left(G\right),E\left(G\right)\right)$ and $H=\left(V\left(H\right),E\left(H\right)\right)$. 
\begin{enumerate}
\item $H$ is a \emph{subgraph} of $G$ if $V\left(H\right)\subseteq V\left(G\right)$
and $E\left(H\right)\subseteq E\left(G\right)$. 
\item $H$ is an \emph{induced subgraph} of $G$ if $H$ is a subgraph
of $G$ and $E\left(H\right)$ contains every edge in $G$ that has
both ends in $V\left(H\right)$ . 
\end{enumerate}
\end{defn}

\begin{defn}
Let $G=\left(V,E\right)$ be a graph and let $U\subseteq V$ be a
set of vertices.
\begin{enumerate}
\item $U$ is a \emph{clique} if the subgraph induced by $U$ is a complete
graph. 
\item $U$ is a \emph{maximal clique} if it is a clique, and $G$ does
not have a clique of greater order. 
\end{enumerate}
\end{defn}

\subsection{Spectral graph theory preliminaries}

There are several books on spectral graph theory, for example \cite{brouwer2011spectra}
, \cite{cvetkovic1988recent} , \cite{cvetkovic1980spectra}. 
\begin{defn}
The adjacency matrix of a graph $G=\left(V,E\right)$ with $n$ vertices,
is a $\left(0,1\right)$ matrix $A\left(G\right)=\left(a_{i,j}\right)\in S^{n}$,
where $a_{i,j}=1$ if and only if $\left(i,j\right)\in E$. 
\end{defn}

\begin{rem}
There are other matrices associated with a graph $G$. For example
the Laplacian, the signless Laplacian, the normalized Laplacian and
the distance matrix. We do not consider them in this paper. 
\end{rem}

\begin{defn}
The \emph{spectrum of a graph} $G$ , $\sigma\left(G\right)$, is
the spectrum of $A\left(G\right)$. The \emph{charateristic matrix (polymonial) of}
$G$ is the characteristic matrix (polynomial) of $A\left(G\right)$. 
\end{defn}

\begin{thm}
\label{thm:spectral-properties-of-a-graph}Let $G=\left(V,E\right)$
be a graph with eigenvalues $\lambda_{n}\le...\le\lambda_{2}\le\lambda_{1}$.
The number of close walks of length $k$ in $G$ is $trace(A^{k})=\sum_{i=1}^{n}\lambda_{i}^{k}$. 
\end{thm}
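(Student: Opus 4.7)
The plan is to establish the two claimed equalities separately: first that the number of closed walks of length $k$ equals $\operatorname{trace}(A^k)$, and then that $\operatorname{trace}(A^k) = \sum_i \lambda_i^k$.

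First I would prove, by induction on $k$, the combinatorial identity that $(A^k)_{ij}$ equals the number of walks of length exactly $k$ from vertex $i$ to vertex $j$. The base case $k=1$ is immediate from the definition of the adjacency matrix: $A_{ij}$ is $1$ if $(i,j)\in E$ and $0$ otherwise, which matches the count of length-$1$ walks. For the inductive step, I would write
\[
(A^{k+1})_{ij} = \sum_{\ell=1}^{n} (A^{k})_{i\ell}\, A_{\ell j},
\]
observe that each term counts walks of length $k$ from $i$ to $\ell$ followed by an edge $(\ell, j)$, and conclude that the sum counts all walks of length $k+1$ from $i$ to $j$. Specializing to $j=i$ shows that $(A^k)_{ii}$ is the number of closed walks of length $k$ starting at $i$, so summing over $i$ gives $\operatorname{trace}(A^k)$ as the total number of closed walks of length $k$ in $G$.

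For the second equality, I would use that $A = A(G) \in S^n$ is real symmetric, hence orthogonally diagonalizable: there exists an orthogonal $Q$ with $A = Q\Lambda Q^T$, where $\Lambda = \operatorname{diag}(\lambda_1, \ldots, \lambda_n)$. Then $A^k = Q\Lambda^k Q^T$, and invariance of the trace under conjugation yields
\[
\operatorname{trace}(A^k) = \operatorname{trace}(Q\Lambda^k Q^T) = \operatorname{trace}(\Lambda^k Q^T Q) = \operatorname{trace}(\Lambda^k) = \sum_{i=1}^{n} \lambda_i^k.
\]

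There is no real obstacle here; the statement is a textbook result and both halves are essentially bookkeeping. The only place requiring any care is the walk-counting induction, where one must be explicit that a walk is a sequence of vertices with consecutive adjacencies (repetitions of vertices and edges allowed), so that concatenating a walk with an edge yields another valid walk of length one greater.
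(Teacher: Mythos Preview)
Your proof is correct and is the standard textbook argument. Note, however, that the paper does not actually prove this theorem: it is stated in the preliminaries section as a known background result, with no proof given. There is therefore nothing to compare your approach against; your induction on $k$ for the walk-counting identity and your use of orthogonal diagonalization for the trace formula are exactly how one would expect this to be established.
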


\begin{cor}
\label{cor:num_of_adges_and_triangels}$ $
\begin{enumerate}
\item $\sum_{i=1}^{n}\lambda_{i}=0$. 
\item The number of edges in $G$ is $\frac{1}{2}\sum_{i=1}^{n}\lambda_{i}^{2}$.
\item The number of triangles in $G$ is $\frac{1}{6}\sum_{i=1}^{n}\lambda_{i}^{3}$.
\end{enumerate}
\end{cor}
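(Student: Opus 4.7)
The plan is to apply Theorem \ref{thm:spectral-properties-of-a-graph} with $k=1,2,3$ in turn and convert each count of closed walks into the quantity named in the corollary. Since $\sum_{i=1}^n \lambda_i^k = \mathrm{trace}(A^k)$ counts closed walks of length $k$, the three parts are just the combinatorial interpretations of closed walks of lengths $1$, $2$, and $3$ respectively.

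For part (1), a closed walk of length $1$ starting at vertex $v$ is a loop at $v$. Since the graphs in this paper are simple, there are no such walks, so $\sum_{i=1}^n \lambda_i = \mathrm{trace}(A) = 0$. (Equivalently, the diagonal of $A(G)$ is zero.) For part (2), a closed walk of length $2$ from $v$ consists of choosing a neighbor $u$ of $v$ and returning; every edge $\{u,v\}$ contributes exactly two such walks (one based at $u$, one based at $v$), so the total is $2|E|$, giving $|E| = \tfrac{1}{2}\sum_{i=1}^n \lambda_i^2$.

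For part (3), a closed walk of length $3$ must be a traversal of a triangle: it visits three distinct vertices because the graph is simple (a repeated vertex would force a loop or a repeated edge traversal within length $3$, which either is impossible or forces a loop). Each triangle $\{u,v,w\}$ yields exactly six closed walks of length $3$, namely three choices of starting vertex times two choices of orientation. Hence the number of triangles equals $\tfrac{1}{6}\sum_{i=1}^n \lambda_i^3$.

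No step looks like a genuine obstacle; the only mildly delicate point is making sure in part (3) that closed walks of length $3$ cannot degenerate (e.g., by revisiting a vertex or an edge) in a simple loopless graph, which follows because a closed walk of length $3$ uses three edges and returning to the start in three steps without loops forces the three vertices to be pairwise distinct and pairwise adjacent.
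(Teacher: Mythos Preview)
Your proof is correct and matches the paper's intended argument: the corollary is stated immediately after Theorem~\ref{thm:spectral-properties-of-a-graph} with no separate proof, precisely because it follows by taking $k=1,2,3$ and interpreting closed walks of those lengths as you do. The only thing the paper leaves implicit that you spell out is the check in part~(3) that a closed walk of length~$3$ in a simple graph must traverse a triangle, which you handle correctly.
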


\begin{thm}
\label{claim:Spectrum-of-disjoint} The spectrum of disjoint union
of two graphs $H_{1},H_{2}$ is $\sigma_{H_{1}\uplus H_{2}}=\sigma_{H_{1}}+\sigma_{H_{2}}$,
where the $+$ sign stands for multisets sum. 
\end{thm}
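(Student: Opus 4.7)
The plan is to reduce the claim to a standard fact about block diagonal matrices by exhibiting the adjacency matrix of the disjoint union in block diagonal form.

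First I would fix an ordering of vertices that lists the vertices of $H_1$ before those of $H_2$. Since $H_1$ and $H_2$ are disjoint and no new edges are introduced by the disjoint union operation, there is no edge between any vertex of $H_1$ and any vertex of $H_2$. Consequently the adjacency matrix of $H_1 \uplus H_2$ in this ordering is
\[
A(H_1 \uplus H_2) \;=\; \begin{pmatrix} A(H_1) & 0 \\ 0 & A(H_2) \end{pmatrix},
\]
a block diagonal symmetric matrix. Any other vertex ordering produces a matrix similar to this one via a permutation, so it has the same spectrum.

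Next I would compute the characteristic polynomial. For a block diagonal matrix the determinant factors, hence
\[
\det\!\bigl(\lambda I - A(H_1 \uplus H_2)\bigr) \;=\; \det\!\bigl(\lambda I_{|V_1|} - A(H_1)\bigr)\,\det\!\bigl(\lambda I_{|V_2|} - A(H_2)\bigr).
\]
Reading off the roots with multiplicity, the eigenvalues of $A(H_1 \uplus H_2)$ are exactly the eigenvalues of $A(H_1)$ together with the eigenvalues of $A(H_2)$, counted with multiplicities. This is precisely the multiset sum $\sigma_{H_1} + \sigma_{H_2}$.

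There is no real obstacle here: the entire content of the theorem is the block diagonal structure of the adjacency matrix, which is an immediate consequence of the definition of disjoint union. The only thing to be careful about is noting that the spectrum is an isomorphism invariant, so the specific choice of vertex ordering used to produce the block diagonal form does not affect the result.
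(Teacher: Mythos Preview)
Your proof is correct and is the standard argument. The paper itself states this theorem as a background preliminary without proof; the block-diagonal form you display is exactly the one the paper later invokes in the proof of Lemma~\ref{lem:edge-disjoind-induced-cliques}, so your approach aligns with how the authors implicitly justify the result.
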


\begin{notation*}
For a real number $a$ and a graph $G$, let $\nu\left(G,a\right)$
($\mu\left(G,a\right)$) denote number of eigenvalues of $G$ that
are greater (smaller) than or equal to $a$. 
\end{notation*}
The following important theorem is a corollary of Theorem \ref{thm:Cauchy's-Eigenvalue-interlacing}: 
\begin{thm}
\label{thm:induced-subgraph-interlacing} Let be $H$ is an induced
subgraph of $G$. Let $\lambda_{n}\le...\le\lambda_{2}\le\lambda_{1}$
be the eigenvalues of $G$ and $\mu_{m}\le...\le\mu_{2}\le\mu_{1}$
the eigenvalues of $H$. Then
\begin{enumerate}
\item $\mu_{1}\le\lambda_{1}$ and $\lambda_{n}\le\mu_{m}$.
\item For every real number $a$, $\nu\left(G,a\right)\ge\nu\left(H,a\right)$
and $\mu\left(G,a\right)\ge\mu\left(H,a\right)$.
\item $rank\left(A\left(H\right)\right)\le rank\left(A\left(G\right)\right)$.
\end{enumerate}
\end{thm}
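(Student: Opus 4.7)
The plan is to observe that since $H$ is an induced subgraph of $G$, after a suitable relabeling of vertices $A(H)$ is a principal submatrix of $A(G)$. This is exactly the hypothesis of Cauchy's Interlace Theorem (Theorem~\ref{thm:Cauchy's-Eigenvalue-interlacing}), which gives
\[
\lambda_{n+k-m} \le \mu_k \le \lambda_k \qquad \text{for every } k \in \langle m\rangle.
\]
All three parts will be read off from this single chain of inequalities, so the body of the argument is essentially bookkeeping.

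For Part 1, I would take $k=1$ in the upper bound to obtain $\mu_1 \le \lambda_1$, and $k=m$ in the lower bound to obtain $\lambda_n = \lambda_{n+m-m} \le \mu_m$.

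For Part 2, I would argue by counting. If $\nu(H,a) = j$, then $\mu_1,\ldots,\mu_j$ are all $\ge a$, and the Cauchy upper bound $\mu_k \le \lambda_k$ promotes this to $\lambda_1,\ldots,\lambda_j \ge a$, whence $\nu(G,a) \ge j = \nu(H,a)$. The second inequality $\mu(G,a) \ge \mu(H,a)$ follows by the symmetric argument: if $\mu(H,a)=j$, then $\mu_m,\ldots,\mu_{m-j+1}\le a$, and the lower Cauchy bound $\lambda_{n+k-m} \le \mu_k$ applied to $k=m-j+1,\ldots,m$ forces $\lambda_{n-j+1},\ldots,\lambda_n \le a$.

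For Part 3, the cleanest route is to invoke Part 2: since the rank of a real symmetric matrix equals its number of nonzero eigenvalues, one splits this count into positive and negative contributions and applies Part 2 once with a small $a>0$ and once with a small $a<0$, obtaining separately that $G$ has at least as many positive (respectively, negative) eigenvalues as $H$.

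The only step requiring genuine care is the very first observation: the fact that $H$ is an \emph{induced} subgraph (not merely a subgraph) is precisely what guarantees that $A(H)$ is a \emph{principal} submatrix of $A(G)$, because we restrict both rows and columns of $A(G)$ to the same index set $V(H)$ and retain every edge with both endpoints in $V(H)$. Without the ``induced'' hypothesis Cauchy's theorem would not apply, so this conceptual check is the only real content; after it, the theorem is a direct repackaging of Cauchy's Interlace Theorem.
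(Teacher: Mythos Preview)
Your proof is correct and follows exactly the approach the paper indicates: the paper states this theorem as a corollary of Cauchy's Interlace Theorem (Theorem~\ref{thm:Cauchy's-Eigenvalue-interlacing}) without supplying details, and your argument fills in precisely those details in the natural way. In particular, your emphasis that the ``induced'' hypothesis is what makes $A(H)$ a principal submatrix of $A(G)$ is the key conceptual point, and your derivations of Parts~1--3 from the interlacing inequalities are all sound.
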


\begin{thm}
\label{thm:spectrum-of-well} The following table lists the spectrum
of some special graphs.
\end{thm}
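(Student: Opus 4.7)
The plan is to verify each row of the table in turn by direct computation on the adjacency matrix, using only elementary matrix theory plus Theorem \ref{claim:Spectrum-of-disjoint} and Theorem \ref{thm:Schur complement}. Since the table presumably lists $\overline{K_n}$, $K_n$, $K_{n,m}$ (hence $S_{n-1}$), $C_n$, and $P_n$, each row admits a short, self-contained argument of a different flavor, so the proof is really a collection of five mini-arguments rather than a single unified derivation.

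For $\overline{K_n}$ the adjacency matrix is the zero matrix, so $\sigma(\overline{K_n})=\{0^n\}$. For $K_n$, I would write $A(K_n)=J_n-I_n$ where $J_n$ is the all-ones matrix; since $J_n$ has rank one with eigenvalues $n$ (simple, eigenvector $\mathbf{1}$) and $0$ (multiplicity $n-1$), subtracting $I_n$ gives $\sigma(K_n)=\{(n-1)^{1},(-1)^{n-1}\}$. For the complete bipartite graph $K_{n,m}$, the adjacency matrix has block form
\[
A(K_{n,m})=\begin{pmatrix}0 & J_{n,m}\\ J_{m,n} & 0\end{pmatrix},
\]
which has rank $2$, so $0$ appears with multiplicity $n+m-2$; the trace is zero and the trace of $A^2$ equals $2nm$ (counting the $2nm$ closed walks of length two by Theorem \ref{thm:spectral-properties-of-a-graph}), which forces the remaining two eigenvalues to be $\pm\sqrt{nm}$. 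The star $S_{n-1}=K_{1,n-1}$ is then the special case $m=1$.

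For the cycle $C_n$, I would observe that $A(C_n)$ is the circulant matrix associated with the polynomial $x+x^{n-1}$, so its eigenvalues are $\omega^k+\omega^{-k}=2\cos(2\pi k/n)$ for $k=0,1,\ldots,n-1$, with $\omega=e^{2\pi i/n}$, and the corresponding eigenvectors are the Fourier basis vectors. For the path $P_n$, I would exhibit explicit eigenvectors: set $v^{(k)}_j=\sin\!\bigl(jk\pi/(n+1)\bigr)$ for $k=1,\ldots,n$ and $j=1,\ldots,n$, and verify via the identity $\sin((j-1)\theta)+\sin((j+1)\theta)=2\cos(\theta)\sin(j\theta)$ that $A(P_n)v^{(k)}=2\cos\!\bigl(k\pi/(n+1)\bigr)v^{(k)}$; the boundary conditions $v^{(k)}_{0}=v^{(k)}_{n+1}=0$ are exactly the zeros of $\sin$, so these are genuine eigenvectors of the tridiagonal matrix, and linear independence (hence completeness) follows from symmetry or from the nonvanishing of the relevant Vandermonde-type determinant.

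The only mildly delicate step is the path case, where one must check both that the boundary conditions are satisfied and that the $n$ vectors $v^{(k)}$ are linearly independent; everything else is a one- or two-line computation. No step presents a real obstacle, so the proof is essentially a verification, and I would present it as five short paragraphs mirroring the five rows of the table.
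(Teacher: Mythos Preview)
Your verification is correct and entirely standard; each of the five mini-arguments works as written. However, there is nothing to compare against: the paper states Theorem~\ref{thm:spectrum-of-well} as background material and provides no proof at all---the table is simply listed after the theorem header, and the results are treated as well known (with references to standard spectral graph theory texts given in the preceding subsection). So your proposal is not an alternative to the paper's proof but rather a proof where the paper offers none.

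One small mismatch: the table does not include $\overline{K_n}$ (it lists $K_n$, $K_{n,m}$, $S_n$, $P_n$, $C_n$), so that paragraph is superfluous, though of course harmless. Everything else lines up with the actual table entries.
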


\begin{center}
\begin{tabular}{ccc}
\toprule 
\textbf{Name} & \textbf{Symbol} & \textbf{Spectrum}\tabularnewline
\midrule 
Complete graph & $K_{n}$ & $\sigma\left(K_{n}\right)=\left\{ \left(-1\right)^{n-1},\left(n-1\right)\right\} $\tabularnewline
Complete bipartite graph & $K_{n,m}$ & $\sigma\left(K_{m}\right)=\left\{ \left(-\sqrt{n\cdot m}\right),\left(0\right)^{n+m-2},\left(\sqrt{n\cdot m}\right)\right\} $\tabularnewline
Star & $S_{n}$ & $\sigma\left(S_{n}\right)=\left\{ \left(-\sqrt{n}\right),\left(0\right)^{n-1},\left(\sqrt{n}\right)\right\} $\tabularnewline
Path & $P_{n}$ & $\sigma\left(P_{n}\right)=\left\{ 2\cdot cos\left(\frac{\pi\cdot k}{n+1}\right)\ \mid\ k\in\left\langle n\right\rangle \right\} $\tabularnewline
Cycle & $C_{n}$ & $\sigma\left(C_{n}\right)=\left\{ 2\cdot cos\left(\frac{2\cdot\pi\cdot k}{n}\right)\ \mid\ k\in\left\langle n\right\rangle \right\} $\tabularnewline
\bottomrule
\end{tabular}
\par\end{center}

\begin{defn}
\label{def:co-spectrality-DS}$ $
\begin{enumerate}
\item Two graphs $G_{1}$ and $G_{2}$ are \emph{cospectral} if they have
the same spectrum.
\item A graph $G$ is \emph{determined by its spectrum} (DS) if every graph
that is cospectral with $G$ is isomorphic to $G$. 
\end{enumerate}
\end{defn}

There are many papers that give examples of DS graphs and of pairs
of non-isomorphic cospectral graphs, for example \cite{van2003graphs}
, \cite{van2009developments} , \cite{godsil1982constructing} .

\begin{thm}
The graphs $K_{n},C_{n},P_{n},\overline{K_{n}}$, and all the graphs
with less than $5$ vertices are DS \cite{van2003graphs}. 
\end{thm}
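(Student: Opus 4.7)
The plan is to treat the four infinite families $K_{n}, \overline{K_{n}}, C_{n}, P_{n}$ independently, extracting from any cospectral candidate $G'$ the invariants of Corollary \ref{cor:num_of_adges_and_triangels} (order, edge count, and triangle count), applying the interlacing bounds of Theorem \ref{thm:induced-subgraph-interlacing}, and comparing against the explicit spectra of Theorem \ref{thm:spectrum-of-well}. The assertion for graphs of order at most four is finitary and will be dispatched by enumeration of the eighteen isomorphism classes that arise.

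The cases $\overline{K_{n}}$, $K_{n}$, and $C_{n}$ are reached quickly. If $G'$ is cospectral with $\overline{K_{n}}$ then $\sum_{i}\lambda_{i}^{2}=0$, so by Corollary \ref{cor:num_of_adges_and_triangels}(2) we get $|E(G')|=0$ and $G'=\overline{K_{n}}$. If $G'$ is cospectral with $K_{n}$ the same identity gives $|E(G')|=\binom{n}{2}$, which on $n$ vertices forces $G'=K_{n}$. For $C_{n}$ the spectrum yields $n$ edges, no triangles, and Perron value $\lambda_{1}=2$; combined with the standard bound $\lambda_{1}\ge d_{\mathrm{avg}}=2$ and its equality case, this forces $G'$ to be $2$-regular, hence a disjoint union of cycles. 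By Theorem \ref{claim:Spectrum-of-disjoint} and the explicit cycle spectra, the multiplicity of $2$ in $\sigma(G')$ equals the number of cyclic components, which is $1$ for $C_{n}$, so $G'=C_{n}$.

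The delicate case, which I expect to be the main obstacle, is $P_{n}$. A cospectral $G'$ has $n$ vertices, $n-1$ edges, no triangles, and spectral radius $\lambda_{1}=2\cos\bigl(\pi/(n+1)\bigr)<2$. Since every cycle has largest eigenvalue $\ge 2$, Theorem \ref{thm:induced-subgraph-interlacing} forbids induced cycles in $G'$, so $G'$ is acyclic; combined with the edge count and Theorem \ref{thm:Tree-properties}, this upgrades $G'$ to a tree. It then remains to rule out every tree other than $P_{n}$: here I would invoke Smith's classification of connected graphs with $\lambda_{1}<2$, whose trees are precisely the Dynkin diagrams $A_{n}, D_{n}, E_{6}, E_{7}, E_{8}$, and verify from their known spectra that only $A_{n}=P_{n}$ attains the value $2\cos(\pi/(n+1))$ for the relevant $n$. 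As a fallback, one could instead induct on $n$ using that the spectrum determines the characteristic polynomial together with the Schwenk-type recursion for characteristic polynomials of trees under leaf deletion.

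Finally, for graphs of order at most four the list of isomorphism classes is short (one on one vertex, two on two, four on three, eleven on four), and one simply computes the spectra of all eighteen adjacency matrices and checks pairwise distinctness; the DS property for each then follows by definition.
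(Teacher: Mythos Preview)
The paper does not prove this theorem; it is quoted from \cite{van2003graphs} as a preliminary, so there is no in-paper argument to compare against. Your outline is essentially correct and follows the standard treatment of each family.

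Two minor remarks. In the $C_{n}$ case, the clause ``no triangles'' is superfluous (and false for $n=3$, though that case coincides with $K_{3}$); your actual argument uses only the edge count together with the equality case of the Rayleigh bound $\lambda_{1}\ge d_{\mathrm{avg}}$, and that suffices. In the $P_{n}$ case, the implication ``no induced cycles $\Rightarrow$ acyclic'' is not automatic: a graph can contain a non-induced cycle. The missing observation is that a \emph{shortest} cycle is always induced (a chord would produce a shorter one), so if $G'$ had any cycle at all it would have an induced $C_{m}$, and then interlacing against $\lambda_{1}(C_{m})=2$ gives the contradiction you want. With that noted, the passage to ``forest with $n-1$ edges, hence tree'' via Theorem~\ref{thm:Tree-properties} is clean, and Smith's classification of trees with spectral radius below $2$ (together with the distinct Coxeter numbers of $A_{n}$, $D_{n}$, $E_{6}$, $E_{7}$, $E_{8}$) finishes the argument as you indicate.
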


The following two examples are of pairs of non-isomorphic cospectral
graphs. 

\begin{example}
\cite{cvetkovic1980spectra} The graphs $S_{4}$ and $C_{4}\uplus K_{1}$
Figure \ref{fig:Cospectral-graphs-with-5-vertices} are cospectral
but \textbf{not} DS.
\begin{figure}[H]
\begin{centering}
\includegraphics[scale=0.3]{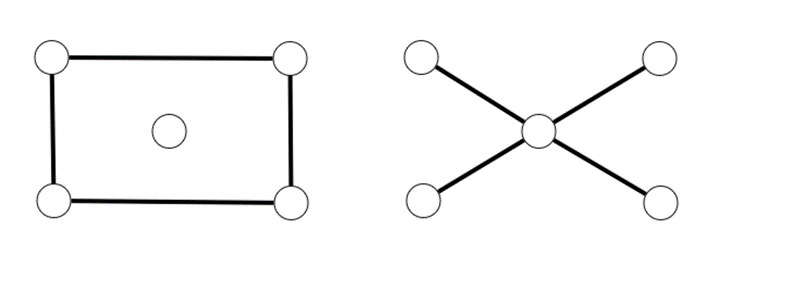}
\par\end{centering}
\caption{\label{fig:Cospectral-graphs-with-5-vertices}Non-isomorphic cospectral
graphs with $5$ vertices}
\end{figure}
\end{example}

The graphs are not isomorphic since one is connected and one is not.
In the following example both graphs are connected. 
\begin{example}
\label{exa:two-regular-cospectral-graphs} \cite{wananiyakul2022distinguish}
The two graphs $\Gamma_{1},\Gamma_{2}$ in Figure \ref{fig:r1_and_r2}
are cospectral and non-isomorphic. 
\begin{figure}[H]
\centering{}\includegraphics[scale=0.4]{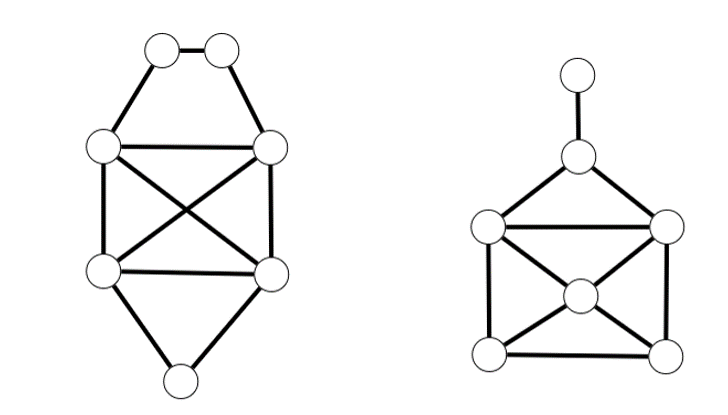}\caption{\label{fig:r1_and_r2} Non-isomorphic cospectral graphs with $7$
vertices}
\end{figure}
 
\end{example}

The following important result follows from Theorem \ref{thm:spectral-properties-of-a-graph}
and Corollary \ref{cor:num_of_adges_and_triangels}. 
\begin{thm}
\cite{van2003graphs} \label{thm:Deducted-properties-from-spectrum}
Let $G$ and $H$ be two cospectral graphs. Then 
\begin{enumerate}
\item $G$ and $H$ have the same number of edges. 
\item $G$ and $H$ have the same number of triangles. 
\item If $G$ is bipartite then $H$ is also bipartite.
\end{enumerate}
\end{thm}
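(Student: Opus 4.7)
The plan is to treat parts (1) and (2) as immediate consequences of the already established Corollary~\ref{cor:num_of_adges_and_triangels}, and to reduce part (3) to a counting argument about closed walks via Theorem~\ref{thm:spectral-properties-of-a-graph}. Concretely, for (1) and (2), since cospectral graphs have identical eigenvalue multisets, the power sums $\sum_{i=1}^n \lambda_i^2$ and $\sum_{i=1}^n \lambda_i^3$ coincide for $G$ and $H$; dividing by $2$ and $6$ respectively gives the equality of edge counts and triangle counts.

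For part (3), I would first observe the well-known graph-theoretic fact that a graph is bipartite if and only if it contains no closed walk of odd length: if a graph is bipartite with parts $V_1,V_2$, then every walk alternates between the two parts, so any closed walk must have even length; conversely, the absence of odd closed walks implies the absence of odd cycles, which is precisely the characterization of bipartiteness. Next, I would invoke Theorem~\ref{thm:spectral-properties-of-a-graph}: for every positive integer $k$, the number of closed walks of length $k$ in a graph equals $\sum_{i=1}^n \lambda_i^k$. If $G$ is bipartite, then for every odd $k$ this sum vanishes. Because $\sigma(G)=\sigma(H)$, the same sum vanishes for $H$ for every odd $k$. Since closed-walk counts are non-negative, this forces $H$ to have no closed walk of odd length, whence $H$ is bipartite.

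There is essentially no serious obstacle: parts (1) and (2) are a direct quotation of Corollary~\ref{cor:num_of_adges_and_triangels}, and the only step in (3) requiring any graph-theoretic input beyond the excerpt is the folklore equivalence \emph{bipartite} $\Leftrightarrow$ \emph{no odd closed walks}, which I would prove in one line via the alternating-parts argument. The spectral content is entirely carried by the identity $\mathrm{trace}(A^k)=\sum \lambda_i^k$, already available from Theorem~\ref{thm:spectral-properties-of-a-graph}.
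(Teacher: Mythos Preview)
Your proposal is correct and matches the paper's approach exactly: the paper does not spell out a proof but simply states that the result ``follows from Theorem~\ref{thm:spectral-properties-of-a-graph} and Corollary~\ref{cor:num_of_adges_and_triangels}'' and cites \cite{van2003graphs}, which is precisely the route you take---power sums for parts (1) and (2), and the closed-walk count $\mathrm{trace}(A^k)=\sum_i\lambda_i^k$ together with the equivalence of bipartiteness and the absence of odd closed walks for part (3).
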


\section{Completely positive matrices and graphs }
\begin{defn}
A matrix $A$ is \emph{completly positive} if there exists a nonnegative
matrix $B$ s.t. $A=BB^{T}$.
\end{defn}

The readers are referred to \cite{berman2003completely} and \cite{ShakedMonderer2021}
for the properties and the many applications of CP matrices. A necessary
condition for a matrix to be completely positive is that it is \emph{doubly nonnegative}
i.e. nonnegative and positive semidefinite. This condition is not
sufficient. 
\begin{example}
\label{exa:non-CP-matrix}The matrix
\[
\begin{pmatrix}1 & 1 & 0 & 0 & 1\\
1 & 2 & 1 & 0 & 0\\
0 & 1 & 2 & 1 & 0\\
0 & 0 & 1 & 2 & 1\\
1 & 0 & 0 & 1 & 3
\end{pmatrix}
\]
is doubly nonnegative but not CP. 
\end{example}

When is the necessary condition sufficient ? 

\begin{defn}
$ $ 
\begin{enumerate}
\item The graph of a matrix $A\in S^{n}$, denoted $G\left(A\right)$, is
a graph $G=\left(V,E\right)$ where 
\[
V=\left\langle n\right\rangle \ ,\ E=\left\{ \left(i,j\right)\ \mid\ i\ne j\ ,\ a_{i,j}\ne0\right\} .
\]
\item If $G\left(A\right)=G$, we say that $A$ is a \emph{matrix realization}
of $G$ .
\end{enumerate}
\end{defn}

\begin{example}
The graph of the matrix in Example \ref{exa:non-CP-matrix} is a pentagon. 
\end{example}

\begin{defn}
A graph $G$ is \emph{completly positive} ( \emph{CP graph} ) if
every doubly nonnegative matrix realization of $G$ is completely
positive. 
\end{defn}

Examples of CP graphs are : 
\begin{itemize}
\item Graphs with less than 5 vertices \cite{maxfield_minc_1962}.
\item Bipartite graphs \cite{berman1988bipartite}.
\item $T_{n}$ \cite{kogan1993characterization}.
\end{itemize}
\begin{defn}
A \emph{long odd cycle} is a cycle of odd length greater than 3. 
\end{defn}

Graphs that contain a long odd cycle are not completely positive \cite{berman1987combinatorial}.
\\

Examples of graphs that contain a long odd cycle are $B_{n}$ \cite{Berman2012}
, a cycle $C_{n}$ with a chord between vertices $1$ and $3$ (Figure
\ref{fig:H5-house-graph}), and the graphs $\Gamma_{1},\Gamma_{2}$
in Figure \ref{fig:r1_and_r2} . 

\begin{figure}[H]
\centering{}\includegraphics[scale=0.3]{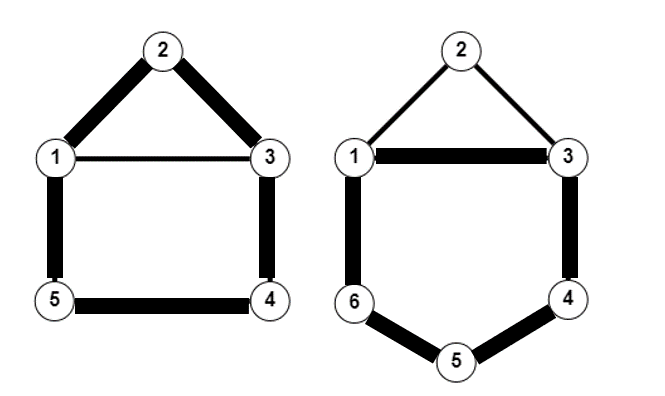}\caption{\label{fig:H5-house-graph}The graphs $B_{5}$ and $B_{6}$.}
\end{figure}

A characterization of completely positive graphs is : 
\begin{thm}
\label{thm:CP-of-graph} \cite{kogan1993characterization} The following
properties of a graph $G$ are equivalent :
\end{thm}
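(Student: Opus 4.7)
The plan is to establish the Kogan--Berman characterization: $G$ is CP if and only if $G$ contains no long odd cycle. One direction is essentially in hand from the discussion preceding the theorem. Indeed, the pentagon example given in Example~\ref{exa:non-CP-matrix} provides a doubly nonnegative realization of $C_5$ that is not CP, and the analogous construction (taking the circulant pattern built from a $5$-cycle and padding by zeros outside the long odd cycle) gives, for any graph $G$ containing an induced long odd cycle, a DNN realization of $G$ that is not CP; since CP is inherited by principal submatrices, the same conclusion holds when the long odd cycle is merely a subgraph (by first extending it to an induced one via added edges and chords handled through a diagonal-dominance perturbation). So ``$G$ is CP implies $G$ has no long odd cycle'' is immediate.

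For the nontrivial direction I would argue in two stages. First I would reduce to the 2-connected case: if $v$ is a cut vertex and $G_1, G_2$ are the two induced subgraphs meeting only at $v$, any DNN realization $A$ of $G$ has the block form that, after rescaling the diagonal entry at $v$, can be written as a sum of two DNN matrices realizing $G_1$ and $G_2$ respectively (each CP factorization then concatenates to one for $A$). This lets me assume $G$ is 2-connected.

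Second, I would use the structural fact that a 2-connected graph containing no odd cycle of length $\ge 5$ is either bipartite or is built up from triangles meeting in a controlled ``book'' fashion (every odd cycle being a triangle forces the non-bipartite blocks to be dense in a very specific way). The bipartite case is disposed of by the cited result \cite{berman1988bipartite}. The non-bipartite case I would handle by the Schur-complement technique that is the main tool of the paper: pick a triangle $T$ in $G$, write any DNN realization $A$ in block form around the vertices of $T$, form $A/A_T$, observe this Schur complement is again DNN and realizes a smaller graph in the same class, and then lift a CP factorization of $A/A_T$ back through the block inverse using nonnegativity of all blocks involved.

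The main obstacle will be the second stage, specifically ensuring that the Schur complement stays nonnegative and that the lift gives a genuinely \emph{nonnegative} (not merely symmetric) factorization. This is where the structural restrictions imposed by the absence of long odd cycles are crucial: they guarantee that the off-diagonal blocks adjacent to the chosen triangle have a sign pattern compatible with the triangle's inverse, so that the Schur complementation preserves nonnegativity and the induction closes. Once this structural/Schur bookkeeping is verified, both directions of the equivalence assemble immediately.
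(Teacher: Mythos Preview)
The paper does not prove Theorem~\ref{thm:CP-of-graph}; it is quoted from \cite{kogan1993characterization} as background (with the equivalence of items 2 and 3 attributed separately to \cite{trotter1977line}), and no argument for it appears anywhere in the text. There is therefore no in-paper proof to compare your sketch against.

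That said, your sketch has genuine gaps. In the ``easy'' direction, ``padding by zeros outside the long odd cycle'' does not produce a matrix realization of $G$: every edge of $G$ outside the cycle must carry a nonzero entry, so the padded matrix has the wrong graph. Your patch for the non-induced case is also backwards---if the long odd cycle has chords in $G$, the induced subgraph on those vertices has \emph{more} edges than $C_{2\ell+1}$, not fewer, so one cannot ``extend it to an induced one via added edges.'' The actual construction (see \cite{berman1987combinatorial}) builds a DNN realization of $G$ directly and is more delicate than a zero-padding. In the hard direction, the structural claim that a $2$-connected graph with no long odd cycle is ``bipartite or built up from triangles in a book fashion'' is essentially the content you are trying to prove and needs its own argument. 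Finally, your Schur-complement induction hinges on $A/A_T$ remaining entrywise nonnegative, but the inverse of a $3\times 3$ DNN block typically has negative off-diagonal entries, so the subtraction $B\,A_T^{-1}B^{T}$ can destroy nonnegativity; the assertion that ``the absence of long odd cycles guarantees the sign pattern is compatible'' is precisely the hard combinatorial--analytic lemma that would still have to be supplied.
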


\begin{enumerate}
\item $G$ is CP. 
\item $G$ does not contain a long odd cycle
\item The line graph of $G$ is perfect. 
\end{enumerate}
The equivalence between 2 and 3 was proved in \cite{trotter1977line}
. 

\section{The graphs $T_{n}$ }

Recall that the graph $T_{n}$ consists of $n-2$ triangles with a
common base. 
\begin{thm}
\label{thm:The-spectrum-of-Tn}The spectrum of $T_{n}$ is 
\[
\sigma\left(T_{n}\right)=\left\{ \left(\frac{1-\sqrt{8n-15}}{2}\right),\left(-1\right),\left(0\right)^{n-3},\left(\frac{1+\sqrt{8n-15}}{2}\right)\right\} .
\]
\end{thm}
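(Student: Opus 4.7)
The plan is to compute the characteristic polynomial of $A := A(T_n)$ via a Schur complement, in the spirit promised by the introduction. Note that $T_n = K_2 \vee \overline{K_{n-2}}$; ordering the two base vertices first,
\[
A = \begin{pmatrix} A(K_2) & J \\ J^T & 0 \end{pmatrix},
\]
where $J$ is the $2\times(n-2)$ all-ones matrix. For $\lambda \ne 0$ the bottom-right block of $A - \lambda I$ is $-\lambda I_{n-2}$, which is invertible, so I form the $2\times 2$ Schur complement
\[
S(\lambda) := \bigl(A(K_2) - \lambda I_2\bigr) + \tfrac{1}{\lambda}\, J J^T,
\]
using $J J^T = (n-2)\,\mathbf{1}\mathbf{1}^T$ with $\mathbf{1} = (1,1)^T$. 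Since $S(\lambda)$ has the form $\begin{pmatrix} a & b \\ b & a \end{pmatrix}$, its determinant factors as the difference of squares $(a-b)(a+b)$, producing a linear factor $-(\lambda+1)$ and a factor $\bigl(\lambda^2 - \lambda - 2(n-2)\bigr)/\lambda$.

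Applying Theorem \ref{thm:Schur complement} then gives
\[
\det(A - \lambda I) = \det(-\lambda I_{n-2})\,\det S(\lambda) = \pm\,\lambda^{n-3}(\lambda+1)\bigl(\lambda^2 - \lambda - 2(n-2)\bigr),
\]
the $\lambda^{n-2}$ coming from the first determinant absorbing the $1/\lambda$ inside $S(\lambda)$. Reading off the roots yields $0$ with multiplicity $n-3$, a simple $-1$, and $\tfrac{1 \pm \sqrt{8n-15}}{2}$ from the quadratic, which matches the claimed spectrum.

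The one subtlety to handle cleanly is that $S(\lambda)$ is a priori undefined at $\lambda = 0$; this is harmless since both sides of the identity are polynomials in $\lambda$ that agree on $\mathbb{C}\setminus\{0\}$, hence also at $0$. As a sanity check, one can verify the multiplicity of $0$ independently: the bottom $n-2$ rows of $A$ are all equal to $(1,1,0,\dots,0)$ and the first two rows add two more independent directions, so $\mathrm{rank}(A) = 3$, forcing the eigenvalue $0$ with multiplicity $n-3$. The three remaining eigenvalues are then pinned down by Corollary \ref{cor:num_of_adges_and_triangels}, whose three power-sum constraints ($\sum\lambda_i = 0$, $\sum\lambda_i^2 = 2(2n-3)$, $\sum\lambda_i^3 = 6(n-2)$) reproduce via Newton's identities exactly the cubic $x^3 - (2n-3)x - (2n-4) = (x+1)\bigl(x^2 - x - 2(n-2)\bigr)$. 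No real obstacle arises; the content is a careful $2 \times 2$ determinant together with bookkeeping of polynomial degrees and multiplicities.
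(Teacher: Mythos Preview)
Your proof is correct and follows essentially the same route as the paper: both compute the characteristic polynomial by taking the Schur complement of the $(n-2)\times(n-2)$ scalar block in $\lambda I - A(T_n)$ (you work with $A-\lambda I$, a harmless sign change) and reduce to a $2\times 2$ determinant. Your difference-of-squares factorization of that $2\times 2$ determinant is slightly slicker than the paper's direct expansion, and the added rank/Newton-identities sanity check is a nice bonus not present in the original, but the core argument is the same.
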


\begin{proof}
The adjacency matrix of $T_{n}$ is 
\[
A\left(T_{n}\right)=\left(\begin{array}{cccccc}
0 & 1 & 1 & 1 & \cdots & 1\\
1 & 0 & 1 & 1 & \cdots & 1\\
1 & 1 & 0 & 0 & \cdots & 0\\
\vdots & \vdots & 0 & 0 & \cdots & \vdots\\
1 & 1 & 0 & 0 & \cdots & 0\\
1 & 1 & 0 & 0 & \cdots & 0
\end{array}\right).
\]
Let $M=\lambda I-A\left(T_{n}\right)$ be the characteristic matrix
of $T_{n}$. The characteristic polynomial of $T_{n}$ is 
\[
det\left(M\right)=det\left(\begin{array}{cccccc}
\lambda & -1 & -1 & -1 & \cdots & -1\\
-1 & \lambda & -1 & -1 & \cdots & -1\\
-1 & -1 & \lambda & 0 & \cdots & 0\\
\vdots & \vdots & 0 & \lambda & 0 & 0\\
-1 & -1 & 0 & 0 & \ddots & 0\\
-1 & -1 & 0 & 0 & \cdots & \lambda
\end{array}\right).
\]

Denote $A=\begin{pmatrix}\lambda & -1\\
-1 & \lambda
\end{pmatrix}\ \ ,B=-J_{2,n}\ \ ,D=\lambda\cdot I_{n-2}$. Then
\[
M=\begin{pmatrix}A & B\\
B^{T} & D
\end{pmatrix}.
\]
 Using Theorem \ref{thm:Schur complement} , we get 
\[
det\left(M\right)=det\left(D\right)\cdot det\left(\nicefrac{M}{D}\right)=det\left(D\right)\cdot det\left(A-B^{T}D^{-1}B\right)
\]

\[
=det\left(\lambda\cdot I_{n-2}\right)\cdot det\left(A-J_{2,n}\cdot\frac{1}{\lambda}I_{n-2}\cdot J_{n,2}\right)
\]
\[
=\left(\lambda\right)^{n-2}\cdot det\left(\begin{pmatrix}\lambda & -1\\
-1 & \lambda
\end{pmatrix}-\frac{n-2}{\lambda}\cdot J_{2,2}\right)=\left(\lambda\right)^{n-2}\cdot det\begin{pmatrix}\lambda-\frac{n-2}{\lambda} & \frac{n-2}{\lambda}\\
\frac{n-2}{\lambda} & \lambda-\frac{n-2}{\lambda}
\end{pmatrix}=
\]
\[
=\left(\lambda\right)^{n-2}\cdot\left[\left(\lambda-\frac{n-2}{\lambda}\right)^{2}-\left(\frac{n-2}{\lambda}\right)^{2}\right]=\ \lambda\cdot\left(\lambda+1\right)\cdot\left(\lambda^{2}-\lambda+2\cdot\left(2-n\right)\right).
\]
Hence, the spectrum is: 
\[
\sigma\left(T_{n}\right)=\left\{ \left(\frac{1-\sqrt{8n-15}}{2}\right),\left(-1\right),\left(0\right)^{n-3},\left(\frac{1+\sqrt{8n-15}}{2}\right)\right\} .
\]
\end{proof}
\begin{rem}
Observe that for $n=3$ we get the spectrum of a triangle $\sigma\left(T_{3}\right)=\sigma\left(K_{3}\right)=\sigma\left(C_{3}\right)=\left\{ \left(-1\right)^{2},\left(2\right)\right\} $.
\end{rem}

In the next section we study the graphs $T_{n,k}$, and show that
for $k>1$ they are DS (Theorem \ref{thm:MAIN-Tnk-DS}). Since $T_{n}=T_{n,2}$,
we get the following theorem as special case of Theorem \ref{thm:MAIN-Tnk-DS}. 
\begin{thm}
\label{thm:MAIN-Tn-DS}The graph $T_{n}$ is determined by its spectrum. 
\end{thm}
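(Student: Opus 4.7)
My plan is to reduce Theorem \ref{thm:MAIN-Tn-DS} to the main theorem of the paper, Theorem \ref{thm:MAIN-Tnk-DS}. The key observation is that $T_n = T_{n,2}$: by construction $T_n$ consists of $n-2$ triangles sharing a common edge, so there are two ``base'' vertices $u,v$ that are adjacent to each other and to each of the remaining $n-2$ vertices, while the remaining $n-2$ vertices are pairwise non-adjacent. This is exactly the join $K_{2}\lor\overline{K_{n-2}}$, which is $T_{n,2}$ in the notation of the paper. Applying Theorem \ref{thm:MAIN-Tnk-DS} with $k=2$ (which is covered by the hypothesis $k>1$) then gives the conclusion in one line.

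If instead I had to give a self-contained direct argument for the $T_n$ case, I would first extract combinatorial invariants from $\sigma(T_n)$ using Corollary \ref{cor:num_of_adges_and_triangels}: a cospectral graph $H$ must have $n$ vertices, $2n-3$ edges, and $n-2$ triangles, and by Theorem \ref{thm:The-spectrum-of-Tn} its adjacency matrix has rank exactly $3$ since $0$ occurs with multiplicity $n-3$. These numerical invariants already pin down a great deal: for instance, $n-2$ triangles on $2n-3$ edges forces the triangles to share edges in a very restricted pattern.

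The structural step is to show that $H$ must contain two vertices that dominate all others. I would argue connectivity first via the simple Perron eigenvalue $\lambda_{+}=\tfrac{1+\sqrt{8n-15}}{2}$ (combined with Theorem \ref{claim:Spectrum-of-disjoint} to rule out disconnected competitors whose largest eigenvalue would have to be $\lambda_+$, forcing the other components to contribute only zero eigenvalues, i.e.\ isolated vertices---then excluded by interlacing against $-1$). Once $H$ is known to be connected with rank $3$ and $n-3$ copies of the eigenvalue $0$, I would use Theorem \ref{thm:induced-subgraph-interlacing} on the subgraph induced by any non-neighbor of a chosen vertex to force a pair of universally-adjacent vertices; the $2n-3$ edges then compel the remaining $n-2$ vertices to have degree exactly $2$, giving $H \cong T_n$.

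The main obstacle I expect in the direct approach is ruling out disconnected cospectral graphs and, among connected ones, showing that the eigenvalue $-1$ together with the rank constraint really does force the two-dominating-vertex pattern rather than some other configuration of $n-2$ triangles on $2n-3$ edges. This is precisely where the inclusion of the simple $-1$ eigenvalue is essential, and is also the step that the more general Theorem \ref{thm:MAIN-Tnk-DS} presumably handles uniformly; hence the cleanest route is the one-line reduction above.
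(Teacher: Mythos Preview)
Your proposal is correct and matches the paper's own treatment exactly: the paper states Theorem~\ref{thm:MAIN-Tn-DS} only as the special case $k=2$ of Theorem~\ref{thm:MAIN-Tnk-DS}, noting that $T_{n}=T_{n,2}$, with no separate direct proof. Your additional sketch of a self-contained argument is extra and not needed for the comparison.
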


In the previous section we showed that $T_{n}$ is CP. For $k>2$,
$T_{n,k}$ contains a long odd cycle, so they are not CP. Hence $T_{n}$
are the only graphs of pyramids that are both DS and CP. 

\section{The graphs $T_{n,k}$}

Let $k<n$ be natural numbers. Define $T_{n,k}=K_{k}\lor\overline{K_{n-k}}$
(not to be confused with Turan graph $T\left(n,k\right)$). For $k=1$,
$T_{n,1}=S_{n-1}$, and for $k=2$, $T_{n,2}=T_{n}$.

Since the graph $T_{n,k}$, $k>1$ consists of $n-k$ pyramids ($K_{k+1}$)
that intersect in $K_{k}$ we refer to it as a \emph{graph of pyramids}.
\begin{thm}
\label{claim:spectrum-of-Tnk} The spectrum of $T_{n,k}$ is 
\[
\sigma\left(T_{n,k}\right)=\left\{ \left(\lambda_{2}\right),\left(-1\right)^{k-1},\left(0\right)^{n-k-1},\left(\lambda_{1}\right)\right\} 
\]
where $\lambda_{1},\lambda_{2}=\frac{\left(k-1\right)\pm\sqrt{\left(k-1\right)^{2}+4k\left(n-k\right)}}{2}$. 
\end{thm}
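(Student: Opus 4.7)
The plan is to compute the characteristic polynomial of $A(T_{n,k})$ via the Schur complement, exactly in the spirit of the proof of Theorem \ref{thm:The-spectrum-of-Tn}. Writing the vertex set as $V(K_k) \cup V(\overline{K_{n-k}})$, the adjacency matrix has the block form
\[
A(T_{n,k}) = \begin{pmatrix} J_k - I_k & J_{k,n-k} \\ J_{n-k,k} & 0 \end{pmatrix},
\]
so the characteristic matrix is
\[
M = \lambda I - A(T_{n,k}) = \begin{pmatrix} (\lambda+1)I_k - J_k & -J_{k,n-k} \\ -J_{n-k,k} & \lambda I_{n-k} \end{pmatrix}.
\]

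Next I would apply Theorem \ref{thm:Schur complement} with $D = \lambda I_{n-k}$ (valid for $\lambda \neq 0$). Using $J_{k,n-k} J_{n-k,k} = (n-k) J_k$, the Schur complement $M/D$ simplifies to
\[
M/D \;=\; (\lambda+1) I_k - \Bigl(1 + \tfrac{n-k}{\lambda}\Bigr) J_k.
\]
Since $J_k$ has eigenvalue $k$ once and $0$ with multiplicity $k-1$, the determinant of $M/D$ is immediate, and multiplying by $\det(D) = \lambda^{n-k}$ yields, after a short simplification,
\[
\det(M) \;=\; \lambda^{n-k-1}\,(\lambda+1)^{k-1}\,\bigl(\lambda^2 - (k-1)\lambda - k(n-k)\bigr).
\]

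From this factorization one reads off $0$ with multiplicity $n-k-1$, $-1$ with multiplicity $k-1$, and the two roots of the quadratic $\lambda^2 - (k-1)\lambda - k(n-k) = 0$, which are exactly $\lambda_1, \lambda_2 = \tfrac{(k-1) \pm \sqrt{(k-1)^2 + 4k(n-k)}}{2}$. These $n$ values (counted with multiplicity) give the claimed spectrum. There is no real obstacle here beyond clean bookkeeping: the only subtle point is that the Schur complement computation is carried out over the field of rational functions in $\lambda$, which is why it remains valid even though $\lambda = 0$ and $\lambda = -1$ turn out to be actual eigenvalues; the resulting polynomial identity in $\lambda$ holds on all of $\mathbb{R}$ by continuity (or by clearing denominators).
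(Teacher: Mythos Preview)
Your proof is correct and follows essentially the same approach as the paper: both write $A(T_{n,k})$ in block form, take the Schur complement with respect to the $\lambda I_{n-k}$ block, and use the spectrum of $J_k$ to factor the resulting determinant into $\lambda^{n-k-1}(\lambda+1)^{k-1}\bigl(\lambda^2-(k-1)\lambda-k(n-k)\bigr)$. Your added remark about working over the field of rational functions in $\lambda$ (so that the identity extends to $\lambda=0$) is a nice point that the paper leaves implicit.
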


\begin{rem}
We precede the proof with a sanity check. For $k=1$, we get the spectrum
of $S_{n-1}$ (Theorem \ref{thm:spectrum-of-well}). For $k=2$, we
get the spectrum of $T_{n}$ (Theorem \ref{thm:The-spectrum-of-Tn}).
\end{rem}

\begin{proof}
(of Theorem \ref{claim:spectrum-of-Tnk}) Let $k\le n$ be natural
numbers and denote $r:=n-k$. The adjacency matrix of $T_{n,k}$ is
\[
A\left(T_{n,k}\right)=\begin{pmatrix}J_{k}-I_{k} & J_{k,r}\\
J_{r,k} & 0_{r}
\end{pmatrix}.
\]
Let $M=\lambda I_{n}-A\left(T_{n,k}\right)$ be the characteristic
matrix of $T_{n,k}$. The characteristic polynomial of $T_{n,k}$
is 
\[
det\left(M\right)=det\begin{pmatrix}\left(\lambda+1\right)I_{k}-J_{k} & -J_{k,r}\\
-J_{r,k} & \lambda I_{r}
\end{pmatrix}.
\]
 Denote $B=\left(\lambda+1\right)I_{k}-J_{k}$. Then 
\[
M=\begin{pmatrix}B & -J_{k,r}\\
-J_{r,k} & \lambda I_{r}
\end{pmatrix}.
\]
 The Schur complement $\nicefrac{M}{\lambda I_{r}}$ is 
\[
\nicefrac{M}{\lambda I_{r}}=B-J_{k,r}\cdot\left(\lambda I\right)^{-1}\cdot J_{r,k}=B-\frac{1}{\lambda}J_{k,r}\cdot J_{r,k}=B-\frac{r}{\lambda}J_{k}=
\]
\[
=\left(\lambda+1\right)I_{k}-J_{k}-\frac{r}{\lambda}J_{k}=\left(\lambda+1\right)I_{k}-\frac{r+\lambda}{\lambda}J_{k}.
\]
 By the theorem of Schur (Theorem \ref{thm:Schur complement} )
\[
det\left(M\right)=det\left(\lambda I_{r}\right)\cdot det\left(\nicefrac{M}{\lambda I_{r}}\right)=\lambda^{r}\cdot det\left(\nicefrac{M}{\lambda I_{r}}\right).
\]
 The determinant of the Schur complement is : 
\[
det\left(\nicefrac{M}{\lambda I_{r}}\right)=det\left(\left(\lambda+1\right)I_{k}-\frac{r+\lambda}{\lambda}J_{k}\right)=det\left(\frac{r+\lambda}{\lambda}\cdot\left(\frac{\lambda\cdot\left(\lambda+1\right)}{\lambda+r}I_{k}-J_{k}\right)\right)
\]
\[
=\left(\frac{r+\lambda}{\lambda}\right)^{k}\cdot det\left(\frac{\lambda\cdot\left(\lambda+1\right)}{\lambda+r}I_{k}-J_{k}\right).
\]
 Since $\sigma\left(J_{k}\right)=\left\{ \left(0\right)^{k-1},\left(k\right)\right\} $,
the spectrum of $\frac{\lambda\cdot\left(\lambda+1\right)}{\lambda+r}I_{k}-J_{k}$
is 
\[
\sigma\left(\frac{\lambda\cdot\left(\lambda+1\right)}{\lambda+r}I_{k}-J_{k}\right)=\left\{ \left(\frac{\lambda\cdot\left(\lambda+1\right)}{\lambda+r}-k\right),\left(\frac{\lambda\cdot\left(\lambda+1\right)}{\lambda+r}\right)^{k-1}\right\} ,
\]
so 
\[
det\left(\frac{\lambda\cdot\left(\lambda+1\right)}{\lambda+r}I_{k}-J_{k}\right)=\left(\frac{\lambda\cdot\left(\lambda+1\right)}{\lambda+r}-k\right)\cdot\left(\frac{\lambda\cdot\left(\lambda+1\right)}{\lambda+r}\right)^{k-1}
\]
\[
det\left(\nicefrac{M}{\lambda I_{r}}\right)=\left(\frac{r+\lambda}{\lambda}\right)^{k}\cdot\left(\frac{\lambda\cdot\left(\lambda+1\right)}{\lambda+r}-k\right)\cdot\left(\frac{\lambda\cdot\left(\lambda+1\right)}{\lambda+r}\right)^{k-1}=
\]
\[
=\frac{\left(\lambda+1\right)^{k-1}\cdot\left(\lambda\cdot\left(\lambda+1\right)-k\left(\lambda+r\right)\right)}{\lambda}.
\]
 Hence, 
\[
det\left(M\right)=\lambda^{r}\cdot det\left(\nicefrac{M}{\lambda I_{r}}\right)=\lambda^{r}\cdot\left(\frac{\left(\lambda+1\right)^{k-1}\cdot\left(\lambda\cdot\left(\lambda+1\right)-k\left(\lambda+r\right)\right)}{\lambda}\right)
\]
\[
=\lambda^{r-1}\cdot\left(\lambda+1\right)^{k-1}\cdot\left(\lambda\left(\lambda+1\right)-k\left(\lambda+r\right)\right)
\]
\[
=\lambda^{r-1}\cdot\left(\lambda+1\right)^{k-1}\cdot\left(\lambda^{2}+\left(1-k\right)\lambda-rk\right).
\]
 Thus, the eigenvalues of $T_{n,k}$ are $0$ with multiplicity $n-k-1$,
$-1$ with multiplicity $k-1$ and the two roots of $\lambda^{2}+\left(1-k\right)\lambda-\left(n-k\right)k$.
\end{proof}
Is $T_{n,k}$ determined by its spectrum ? 

We consider two cases - the star graphs and the graphs of pyramids.

\begin{thm}
The star graph $S_{n}=T_{n+1,1}$ is DS iff $n$ is prime. 
\end{thm}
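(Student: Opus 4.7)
The plan is to prove both directions separately, using the known spectrum $\sigma(S_n)=\{-\sqrt{n},0^{n-1},\sqrt{n}\}$ from Theorem~\ref{thm:spectrum-of-well} and the fact that $S_n$ is a connected bipartite graph on $n+1$ vertices.

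For the ``only if'' direction, I would prove the contrapositive: if $n$ is composite, exhibit a graph cospectral but non-isomorphic to $S_n$. Writing $n=ab$ with $2\le a\le b$, consider
\[
G = K_{a,b}\;\uplus\;(n+1-a-b)\cdot K_{1}.
\]
By Theorem~\ref{thm:spectrum-of-well}, $\sigma(K_{a,b})=\{-\sqrt{ab},0^{a+b-2},\sqrt{ab}\}=\{-\sqrt{n},0^{a+b-2},\sqrt{n}\}$, and by Theorem~\ref{claim:Spectrum-of-disjoint} the isolated vertices contribute $n+1-a-b$ additional zero eigenvalues, yielding exactly $\sigma(S_n)$. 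Since $(a-1)(b-1)\ge1$ gives $a+b\le n<n+1$, $G$ has at least one isolated vertex, so $G$ is disconnected while $S_n$ is connected; hence they are not isomorphic.

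For the ``if'' direction, suppose $n$ is prime and let $G$ be any graph cospectral with $S_n$. Since $S_n$ is bipartite, Theorem~\ref{thm:Deducted-properties-from-spectrum} forces $G$ to be bipartite. The spectrum shows that $A(G)$ has exactly two nonzero eigenvalues, so $\operatorname{rank}(A(G))=2$, and $\sqrt{n}$ appears with multiplicity one. By Theorem~\ref{claim:Spectrum-of-disjoint}, the spectrum of $G$ is the multiset union of the spectra of its connected components; since only one component can contain the simple eigenvalue $\sqrt{n}$ and every other component must have largest eigenvalue $0$, all other components are isolated vertices. Thus $G = G_1\uplus k\cdot K_1$ with $G_1$ connected, bipartite, having spectrum $\{-\sqrt n,0^{m-2},\sqrt n\}$ on $m$ vertices.

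The key step is then to identify $G_1$. Because $G_1$ is bipartite with parts $X,Y$, its adjacency matrix has the block form $\begin{pmatrix}0&B\\B^T&0\end{pmatrix}$ and hence $\operatorname{rank}(A(G_1))=2\operatorname{rank}(B)=2$, so $\operatorname{rank}(B)=1$. Connectedness of $G_1$ prevents any zero row or column in $B$, and a $(0,1)$-matrix of rank one with no zero row or column must be the all-ones matrix; therefore $G_1=K_{|X|,|Y|}$. Matching $\sqrt{|X|\cdot|Y|}=\sqrt n$ gives $|X|\cdot|Y|=n$, and primality of $n$ forces $\{|X|,|Y|\}=\{1,n\}$, so $G_1=K_{1,n}=S_n$. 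Counting vertices gives $k=(n+1)-(n+1)=0$, whence $G\cong S_n$. The main obstacle is the rank-one/connectedness argument identifying $G_1$ as a complete bipartite graph; once that is established, primality immediately finishes the proof.
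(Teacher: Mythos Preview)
Your proof is correct. The ``only if'' direction is essentially identical to the paper's construction. For the ``if'' direction, however, you take a genuinely different and cleaner route than the paper. The paper argues combinatorially: it shows $G$ is $P_4$-free (using $\operatorname{rank}(A(P_4))>2$ and interlacing), deduces that the diameter of the nonempty component is at most $2$, then does a case split (complete bipartite versus not) to rule out cycles, concludes the nonempty component is a tree, and finishes with an ad hoc structural argument that this tree must be a star. Your argument bypasses all of this by working directly with the biadjacency matrix: from $\operatorname{rank}(A(G_1))=2$ you extract $\operatorname{rank}(B)=1$, and the observation that a $(0,1)$-matrix of rank one with no zero row or column is the all-ones matrix identifies $G_1$ as a complete bipartite graph in one step, after which primality of $n$ pins it down as $K_{1,n}$. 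Your approach is shorter and purely linear-algebraic; the paper's approach is more combinatorial and requires more casework, though it stays closer to the graph-theoretic picture.
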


\begin{proof}
The spectrum of $S_{n}$ is 
\[
\sigma\left(S_{n}\right)=\left\{ \left(-\sqrt{n}\right),\left(0\right)^{n-1},\left(\sqrt{n}\right)\right\} 
\]
\begin{itemize}
\item If $n$ is composite, there exist natural numbers $2\le p\le q$ s.t.
$n=p\cdot q$. Let $l=n+1-p-q$. Observe that $1\le l$ since 
\[
n=p\cdot q=\left(q-1\right)p+p\ge\left(2-1\right)p+p=p+p\ge q+p.
\]
Let $H=K_{p,q}\uplus\overline{K_{l}}$. Since $\sigma\left(K_{p,q}\right)=\left\{ \left(-\sqrt{pq}\right),\left(0\right)^{p+q-2},\left(\sqrt{pq}\right)\right\} $
and $\sigma\left(\overline{K_{l}}\right)=\left\{ \left(0\right)^{l}\right\} $,
$H$ and $S_{n}$ are cospectral. They are not isomorphic since $S_{n}$
is connected and $H$ is not connected.
\item If $n$ is prime, we have to show that $S_{n}$ is DS. Let $G=\left(V,E\right)$
be cospectral with $S_{n}$. Since all the graphs of less than $5$
vertices are DS, we can assume that $n\ge5$. Since $S_{n}$ is bipartite,
$G$ is bipartite as well, so the subgraph induced by a maximal clique
in $G$ is $K_{2}$. $G$ is not a disjoint union of non-empty graphs,
since it has only $1$ positive eigenvalue. By Theorem \ref{thm:induced-subgraph-interlacing},
$P_{4}$ is not an induced subgraph of $G$, since $rank\left(A\left(P_{4}\right)\right)>2$.
Hence, the maximal distance between two vertices in the same connected
component of $G$ is $2$. First we show that $G$ is a tree by showing
it has no cycles, and then that the trees $G$ and $S_{n}$ are isomorphic.
Suppose to the contrary that there exists a cycle in $G$. Let $H$
be the only non-empty connected component of $G$. Since $G$ is bipartite,
$H$ is bipartite as well. Let $L$ and $R$ be the two parts of $H$,
$H=\left(L\uplus R,E\right)$. Since $H$ has a cycle $\left|L\right|>1$,
$\left|R\right|>1$. 
\begin{itemize}
\item If $H$ is a complete bipartite graph then 
\[
\sigma\left(H\right)=\left\{ \left(-\sqrt{\left|L\right|\cdot\left|R\right|}\right),\left(0\right)^{\left|L\right|+\left|R\right|-2},\left(\sqrt{\left|L\right|\cdot\left|R\right|}\right)\right\} .
\]
Thus
\[
\sigma\left(G\right)=\left\{ \left(-\sqrt{\left|L\right|\cdot\left|R\right|}\right),\left(0\right)^{\left|V\right|-2},\left(\sqrt{\left|L\right|\cdot\left|R\right|}\right)\right\} =\sigma\left(S_{n}\right)
\]
\[
\Rightarrow\left|L\right|\cdot\left|R\right|=n,
\]
 contradicting the primality of $n$. 
\item If $H$ is not a complete bipartite graph, then there exist $v\in L,u\in R$
s.t. $\left(v,u\right)\notin E$. Hence, $d\left(u,v\right)\ne1$.
Since $G$ is bipartite $d\left(u,v\right)\ne2$. Since $u,v$ are
in the same connected component, $d\left(v,u\right)>2$. a contradiction. 
\end{itemize}
We showed that $G$ has no cycle, so since $G$ has $n$ edges it
is a tree by Theorem \ref{thm:Tree-properties}. Let $v_{1},v_{2},v_{3}$
be vertices of $G$ where $v_{1},v_{2}$ are neighbors. Since $G$
is triangle-free we can assume without loss of generality that $\left(v_{2},v_{3}\right)\notin E$.
Assume to the contrary that there exists a vertex $u$ s.t. $\left(u,v_{1}\right)\notin E$.
Since $G$ is a tree there is only one path from $v_{1}$ to $u$.
Let $w\in V$ be a vertex s.t. $\left(w,u\right),\left(v_{1},u\right)\in E$.
Assume without loss of generality that $w\ne v_{2}$. The distance
between $w$ and $v_{2}$ is strictly greater than $2$ - a contradiction
since $P_{4}$ is not an induced subgraph of $G$. Hence, $v_{1}$
is adjacent to all the other vertices, so $G$ is isomorphic to $S_{n}$. 
\end{itemize}
\end{proof}
\begin{thm}
\label{thm:MAIN-Tnk-DS}The graphs of pyramids are DS.
\end{thm}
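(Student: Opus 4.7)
My plan is to let $G$ be cospectral with $T_{n,k}$ (with $k \geq 2$) and deduce $G \cong T_{n,k}$. Since $\sigma(T_{n,k})$ contains exactly one positive eigenvalue, so does $\sigma(G)$. The first step is to invoke the classical theorem of J.\,H.\,Smith: a graph with exactly one positive eigenvalue is a complete multipartite graph together with (possibly) some isolated vertices. Hence
\[
G = K_{a_{1},\ldots,a_{r}} \uplus \overline{K_{s}}, \qquad r \geq 2,\ a_{i} \geq 1,\ \sum_{i=1}^{r} a_{i} + s = n.
\]
The degenerate case $n = k+1$ gives $T_{n,k} = K_{k+1}$, which is DS, so I may assume $n \geq k+2$ from here on.

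Next I would pin down $r$ via the nullity. The partition into the $r$ color classes of the multipartite component is equitable, and the associated $r \times r$ quotient matrix $B$ with $B_{ii} = 0$, $B_{ij} = a_{j}$ ($i \neq j$) satisfies $\det B = (-1)^{r}(1-r)\prod_{i}a_{i} \neq 0$ (apply the matrix determinant lemma to $B = \mathbf{1}\mathbf{a}^{T} - \mathrm{diag}(\mathbf{a})$). Thus all $r$ eigenvalues of $B$ are nonzero, so the nullity of $G$ equals exactly $n - r$; comparing with the multiplicity $n-k-1$ of $0$ in $\sigma(T_{n,k})$ forces $r = k+1$.

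Then I would fix the part sizes using the multiplicity of $-1$. The equation $(B+I)v = 0$ rewrites as $(a_{i}-1)v_{i} = C$ for every $i$, where $C := \sum_{j} a_{j} v_{j}$. The case $C \neq 0$ forces every $a_{i} \geq 2$ together with $\sum_{i}\frac{a_{i}}{a_{i}-1} = 1$, which is impossible as each summand exceeds $1$. Hence $C = 0$, so $v_{i} = 0$ whenever $a_{i} > 1$, and a direct count gives $\dim \ker(B+I) = |\{i : a_{i} = 1\}| - 1$. Matching this with the multiplicity $k-1$ of $-1$ in $\sigma(T_{n,k})$ yields exactly $k$ parts of size $1$ and one remaining part of size $n - s - k$, so $G \cong T_{n-s,k} \uplus \overline{K_{s}}$.

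Finally, I would rule out $s > 0$ by comparing spectral radii: $\lambda_{1}(T_{n-s,k}) = \frac{(k-1) + \sqrt{(k-1)^{2} + 4k(n-s-k)}}{2}$ is strictly increasing in $n-s$, so $s > 0$ would contradict $\lambda_{1}(G) = \lambda_{1}(T_{n,k})$. Therefore $s = 0$ and $G \cong T_{n,k}$. The main obstacle I anticipate is the first step: either citing Smith's theorem cleanly or giving a self-contained derivation that a single positive eigenvalue forces the complete-multipartite-plus-isolated structure. Once that is in hand, the rest is routine eigenvector bookkeeping through the equitable partition.
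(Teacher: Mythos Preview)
Your proof is correct and takes a genuinely different route from the paper's. The paper never invokes Smith's characterization; instead it rebuilds the structure of $G$ from scratch using only interlacing. Concretely, the paper shows (i) $K_{k+2}$ and $P_{4}$ cannot be induced subgraphs (interlacing against the $k$ negative and the single positive eigenvalue), (ii) any two disjoint non-empty induced subgraphs must be joined by an edge (again from the single positive eigenvalue), and then (iii) that outside a maximal clique the graph is edgeless, via a somewhat laborious case analysis that bottoms out in two explicit $6$-vertex obstructions whose spectra are computed numerically. From there an edge count forces the clique to have size $k+1$ and the off-diagonal block to be all ones.

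Your argument replaces all of this structural case analysis with one citation (Smith's theorem) followed by clean linear algebra on the $r \times r$ quotient matrix: the nullity count pins down $r = k+1$, the $(-1)$-eigenspace computation pins down that exactly $k$ parts are singletons, and a monotonicity of $\lambda_{1}$ kills the isolated vertices. This is shorter, more conceptual, and avoids any ad hoc $6$-vertex computations. The only cost is the reliance on Smith's theorem, which you rightly flag; the paper's approach, by contrast, is entirely self-contained. Both strategies ultimately exploit the same spectral information (one positive eigenvalue, multiplicities of $0$ and $-1$), but yours packages the ``one positive eigenvalue $\Rightarrow$ complete multipartite'' step as a known theorem, whereas the paper reproves the needed consequences by hand.
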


\begin{proof}
Let $1<k\le n$ and let $G=\left(V,E\right)$ be a graph that is cospectral
with $T_{n,k}$. We have to show that $G$ and $T_{n,k}$ are isomorphic.
We start by showing that 
\begin{itemize}
\item $K_{k+2}$ and $P_{4}$ are not induced subgraphs of $G$ (Lemma \ref{lem:k+2 clique}). 
\item Every two non-empty induced subgraphs of $G$ are connected by an
edge (Lemma \ref{lem:edge-disjoind-induced-cliques}).
\item If $U$ is a maximal clique in $G$, then every edge in $G$ has an
end in $U$ (Lemma \ref{lem:spanning-clique}). 
\end{itemize}
\begin{lem}
\label{lem:properties_of_G_Tnk}$ $
\begin{enumerate}
\item $G$ has one positive eigenvalue. 
\item $G$ has $k$ negative eigenvalues.
\item $G$ has one eigenvalue strictly smaller than $-1$.
\end{enumerate}
\end{lem}

\begin{proof}
Since $G$ is cospectral with $T_{n,k}$, its spectrum is 
\[
\sigma\left(T_{n,k}\right)=\left\{ \left(\lambda_{2}\right),\left(-1\right)^{k-1},\left(0\right)^{n-k-1},\left(\lambda_{1}\right)\right\} 
\]
where $\lambda_{1},\lambda_{2}=\frac{\left(k-1\right)\pm\sqrt{\left(k-1\right)^{2}+4k\left(n-k\right)}}{2}$. 
\end{proof}
\begin{lem}
\label{lem:k+2 clique}$ $
\begin{enumerate}
\item $G$ does not contain a clique of size $k+2$.
\item $G$ does not have $P_{4}$ as an induced subgraph.
\end{enumerate}
\end{lem}

\begin{proof}
By Theorem \ref{thm:spectrum-of-well} , the spectrum of $K_{k+2}$
is $\sigma\left(K_{k+2}\right)=\left\{ \left(-1\right)^{k+1},\left(k+1\right)\right\} $,
and the spectrum of $P_{4}$ is $\sigma\left(P_{4}\right)=\left\{ 2\cdot cos\left(\frac{\pi\cdot k}{5}\right)\ \mid\ k\in\left\langle 4\right\rangle \right\} $.
$K_{k+2}$ has $k+1$ negative eigenvalues and $P_{4}$ has $2$ positive
eigenvalues. By Lemma \ref{lem:properties_of_G_Tnk}, $G$ has $k$
negative eigenvalues and $1$ positive eigenvalue. Thus, by Theorem
\ref{thm:induced-subgraph-interlacing}, $P_{4}$ and $K_{k+2}$ are
not induced subgraphs of $G$.
\end{proof}
\begin{lem}
\label{lem:edge-disjoind-induced-cliques}Every two disjoint non-empty
induced subgraphs in $G$ are connected by an edge. 
\end{lem}

\begin{proof}
Assume there exist two disjoint non-empty induced subgraphs $H_{1}=\left(V_{1},E_{1}\right),H_{2}=\left(V_{2},E_{2}\right)$.
Let $H=H_{1}\uplus H_{2}$ be the subgraph induced by $V_{1}\cup V_{2}$.
\[
A\left(H\right)=\begin{pmatrix}A\left(H_{1}\right) & 0\\
0 & A\left(H_{2}\right)
\end{pmatrix}.
\]
By Theorem \ref{claim:Spectrum-of-disjoint}, $\sigma\left(H\right)=\sigma\left(H_{1}\right)+\sigma\left(H_{2}\right)$.
Both $H_{1}$ and $H_{2}$ have a positive eigenvalue, since they
are non-empty. Thus, $H$ has at least two positive eigenvalues, contradicting
Theorem \ref{thm:induced-subgraph-interlacing} . 
\end{proof}
\begin{lem}
\label{lem:spanning-clique}Let $H_{1}=\left(V_{1},E_{1}\right)$
be a maximal clique in $G$, and let $H_{2}=\left(V_{2},E_{2}\right)$
be the subgraph induced by the vertices $V_{2}:=V\setminus V_{1}$.
Then $H_{2}$ is an empty graph. 
\end{lem}

\begin{proof}
Without loss of generality we assume that $V_{1}=\left\langle m\right\rangle $.
By Lemma \ref{lem:k+2 clique} $m\le k+1$. We have to show that $H_{2}$
is an empty graph. Assume to the contrary that there is an edge $e=\left(v_{1},v_{2}\right)\in E_{2}$,
for some $v_{1},v_{2}\in V_{2}$. Since $H_{1}$ is induced by a maximal
clique, there exists a vertex $u_{1}\in V_{1}$ s.t. $\left(v_{1},u_{1}\right)\notin E$
(otherwise, the subgraph obtained by $V\uplus\left\{ v_{1}\right\} $
is a complete graph of order greater than $m$, contradicting the
maximality of $H_{1}$). Similarly, there exists a vertex $u_{2}\in V_{1}$
s.t. $\left(v_{2},u_{2}\right)\notin E$. We can also assume that
$u_{1}\ne u_{2}$, since if both $v_{1},v_{2}$ are connected to all
the vertices except $u_{1}$, then $\left(V_{1}\setminus\left\{ u\right\} \right)\uplus\left\{ v_{1},v_{2}\right\} $
is a clique, contradicting the maximality of $V_{1}$. 

We now show that the following cases are impossible : 
\begin{enumerate}
\item $\left(v_{1},u_{2}\right)\notin E$ and $\left(v_{2},u_{1}\right)\notin E$
\item $\left(v_{1},u_{2}\right)\in E$ but $\left(v_{2},u_{1}\right)\notin E$ 
\item $\left(v_{1},u_{2}\right)\notin E$ but $\left(v_{2},u_{1}\right)\in E$ 
\item $\left(v_{1},u_{2}\right)\in E$ and $\left(v_{2},u_{1}\right)\in E$
\end{enumerate}
Case $1$ is impossible since in this case the subgraph obtained by
the vertices $\left\{ v_{1},v_{2},u_{1},u_{2}\right\} $ is $K_{2}\uplus K_{2}$.
Thus, $G$ has induced cliques that are not connected by an edge,
contradicting Lemma \ref{lem:edge-disjoind-induced-cliques}. Case
$2$ is impossible since the subgraph induced by the vertices $\left\{ v_{1},v_{2},u_{1},u_{2}\right\} $
is $P_{4}$, contradicting Lemma \ref{lem:k+2 clique}. Similarly,
Case $3$ is impossible. The proof that Case $4$ is impossible needs
more work:

Since $G$ is cospectral with $T_{n,k}$ they have the same number
of edges. Thus, there exists another edge in $E_{2}$. By Lemma \ref{lem:edge-disjoind-induced-cliques},
$G$ does not have disjoint cliques. Thus, there is an edge $\left(v_{1},v_{3}\right)$
for some $v_{3}\in V_{2}$. By Lemma \ref{lem:k+2 clique} there exists
$u_{3}\in V_{1}$ s.t. $\left(u_{3},v_{3}\right)\notin E$. Up to
isomorphism, there are two possibilities for the subgraph induced
by vertices $\left\{ v_{1},v_{2},v_{3},u_{1},u_{2},u_{3}\right\} $
- $\left(v_{2},v_{3}\right)\in E_{2}$ or $\left(v_{2},v_{3}\right)\notin E_{2}$. 

If $\left(v_{2},v_{3}\right)\in E_{2}$, then $\left\{ v_{1},v_{2},v_{3}\right\} $
is a triangle in $H_{2}$. Since $H_{1}$ is a maximal clique, there
exists a vertex $u_{3}\in V_{1}$ s.t. $\left(v_{3},u_{3}\right)\notin E$
(otherwise $\left\{ v_{1},v_{3}\right\} $ or $\left\{ v_{2},v_{3}\right\} $
satisfies one of the previous conditions). In this case, the subgraph
induced by the vertices $\left\{ v_{1},v_{2},v_{3},u_{1},u_{2},u_{3}\right\} $
is $H_{3}$ in Figure \ref{fig:h3}. Its adjacency matrix is 
\[
A\left(H_{3}\right)=\begin{pmatrix}0 & 1 & 1 & 0 & 1 & 1\\
1 & 0 & 1 & 1 & 0 & 1\\
1 & 1 & 0 & 1 & 1 & 0\\
0 & 1 & 1 & 0 & 1 & 1\\
1 & 0 & 1 & 1 & 0 & 1\\
1 & 1 & 0 & 1 & 1 & 0
\end{pmatrix}
\]
 and its spectrum is 
\[
\sigma\left(H_{3}\right)=\left\{ \left(0\right)^{3},\left(-2\right)^{2},\left(4\right)\right\} .
\]
\begin{figure}[H]
\centering{}\includegraphics[scale=0.4]{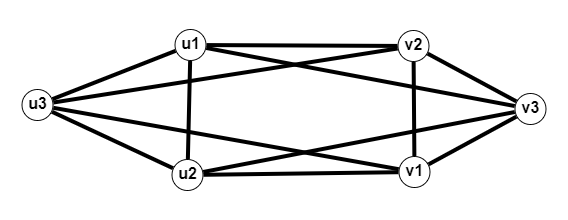}\caption{\label{fig:h3}$H_{3}$}
\end{figure}
Hence, $H_{3}$ has $2$ eigenvalues strictly smaller than $-1$,
contradicting Theorem \ref{thm:induced-subgraph-interlacing}. 

If $\left(v_{2},v_{3}\right)\notin E_{2}$, there exists a vertex
$u_{3}\in V_{1}$ s.t. $\left(v_{3},u_{3}\right)\notin E$. Assume
that there exists $u_{3}\notin\left\{ u_{1},u_{2}\right\} $ such
that $\left(v_{1},u_{3}\right)\in E$ and $\left(v_{2},u_{3}\right)\in E$
(otherwise one of the previous conditions occur, with respect to $v_{3}$).
In this case, the subgraph induced by the vertices $\left\{ v_{1},v_{2},v_{3},u_{1},u_{2},u_{3}\right\} $
is $H_{4}$ in Figure \ref{fig:H4}. Its adjacency matrix is 
\[
A\left(H_{4}\right)=\begin{pmatrix}0 & 1 & 1 & 0 & 1 & 1\\
1 & 0 & 0 & 1 & 0 & 1\\
1 & 0 & 0 & 1 & 1 & 0\\
0 & 1 & 1 & 0 & 1 & 1\\
1 & 0 & 1 & 1 & 0 & 1\\
1 & 1 & 0 & 1 & 1 & 0
\end{pmatrix}
\]
 and its spectrum is 
\[
\sigma\left(H_{4}\right)=\left\{ \left(\frac{-\sqrt{5}-1}{2}\right),\left(-2.231\right),\left(-0.483\right),\left(0\right),\left(\frac{-\sqrt{5}-1}{2}\right),\left(3.714\right)\right\} .
\]
\begin{figure}[H]
\begin{centering}
\includegraphics[scale=0.4]{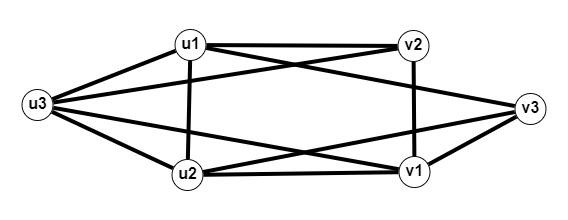}
\par\end{centering}
\caption{\label{fig:H4}$H_{4}$}
\end{figure}
 Hence, $H_{4}$ also has $2$ eigenvalues strictly smaller than $-1$,
contradicting Theorem \ref{thm:induced-subgraph-interlacing} . 
\end{proof}
Now we can complete the proof of the theorem. The adjacency matrix
of $G$ is
\[
A\left(G\right)=\begin{pmatrix}A\left(H_{1}\right) & B\\
B^{T} & 0
\end{pmatrix}=\begin{pmatrix}J_{m}-I_{m} & B\\
B^{T} & 0_{n-m}
\end{pmatrix}
\]
 for some block matrix $B$. Recall that 
\[
A\left(T_{n,k}\right)=\begin{pmatrix}J_{k+1}-I_{k+1} & J_{k+1,n-k-1}\\
J_{n-k-1,k+1} & 0_{n-k-1}
\end{pmatrix}.
\]
 We want to show that $m=k+1$ and $B=J_{n\times\left(n-m\right)}$.
Assume to the contrary $m\ne k+1$. By Lemma \ref{lem:k+2 clique},
$m<k+1$. Then clearly the number of edges of $G$ is smaller then
the number of edges of $T_{n,k}$, contradicting Theorem \ref{thm:Deducted-properties-from-spectrum}.
Hence, $m=k+1$ and for some $B\in\mathbb{R}^{\left(k+1\right)\times\left(n-k-1\right)}$
\[
A\left(G\right)=\begin{pmatrix}J_{k+1}-I_{k+1} & B\\
B^{T} & 0_{n-k-1}
\end{pmatrix}.
\]
Again by considering the number of edges, all the entries of $B$
are equal to $1$. Thus 
\[
A\left(G\right)=\begin{pmatrix}J_{k+1}-I_{k+1} & J_{k+1,n-k-1}\\
J_{n-k-1,k+1} & 0_{n-k-1}
\end{pmatrix}=A\left(T_{n,k}\right).
\]
\end{proof}

\section{Summary}

We showed that the graphs of pyramids $T_{n,k}$ ($2\le k<n$) are
DS (Theorem \ref{thm:MAIN-Tnk-DS}), and that for $k=1$, the graph
$T_{n,1}=S_{n-1}$ is DS if and only if $n-1$ is prime.

In the next table we list the graphs $T_{n,k}$ according to their
being DS/CP :

\begin{table}[H]
\centering{}%
\begin{tabular}{ccc}
\toprule 
 & CP & not CP\tabularnewline
\midrule 
DS & $k=2$ or $k=1,\ n$ is prime & $3\le k$\tabularnewline
\midrule 
not DS & $k=1,\ n$ is composite & \tabularnewline
\bottomrule
\end{tabular}\caption{Classification of the graphs $T_{n,k}$ according to their being DS/CP.}
\end{table}

We have an empty field in the table, since the only graphs $T_{n,k}$
that are not DS are stars, and stars are CP since they are bipartite.
This suggests the following questions on general graphs (not only
$T_{n,k}$). 
\begin{question}
What is the smallest order $\nu$ of a graph that is not CP neither
DS ? 
\end{question}

Solution. The graphs $\Gamma_{1},\Gamma_{2}$ in Figure \ref{fig:r1_and_r2}
are not DS, and by Theorem \ref{thm:CP-of-graph} are not CP, so $\nu\le7$.
On the other hand, $\nu\ge6$ since all the graphs with 4 vertices
are CP, and there is only one pair of non-isomorphic cospectral graphs
with 5 vertices (Figure \ref{fig:Cospectral-graphs-with-5-vertices}),
and both are CP. Computer search shows that $\nu>6$ \cite{cvetkovic1984table}
, so $\nu=7$. 

\bibliographystyle{plain}
\bibliography{refs}

\end{document}